% AVOID PAGEBREAKS AND FIG #s

%regular imports
\documentclass[11pt,reqno]{amsart}
\usepackage{amsfonts,amsmath,amssymb,amsbsy,amstext,palatino, euler}
\usepackage{accents,color,enumerate,float,verbatim, mathbbol, bbm}
\usepackage{graphicx,epstopdf}
\usepackage[margin=1.1in,headheight=13pt]{geometry}
\usepackage[utf8]{inputenc}
\usepackage{indentfirst}
\usepackage{setspace, graphics}
\usepackage{mathtools}
\usepackage{xfakebold}
\usepackage{caption}  
%tikz imports
\usepackage{tikz}
\usepackage{scalerel}
\usepackage{pict2e}
\usepackage{tkz-euclide}
\usetikzlibrary{calc}
%\usetikzlibrary{patterns.arrows.meta}
%\usetikzlibrary{tikz}
\usetikzlibrary{external}
\usepackage{pgfplots}
\pgfplotsset{compat=newest}
\usepgfplotslibrary{statistics}
\usepgfplotslibrary{fillbetween}
\usepackage{adjustbox} 
\usepackage{comment}
\usepackage{microtype}
\usetikzlibrary{trees,positioning,arrows,chains,shapes.geometric,%
decorations.pathreplacing,decorations.pathmorphing,shapes,%
matrix,shapes.symbols,plotmarks,decorations.markings,shadows}

\newcommand{\fbseries}{\unskip\setBold\aftergroup\unsetBold\aftergroup\ignorespaces}
\makeatletter
\newcommand{\setBoldness}[1]{\def\fake@bold{#1}}
\makeatother
\newcommand{\contacts}{
 \footnotesize
  \bigskip
  \footnotesize

  A.~Asthana, \textsc{Dulles High School,
    Sugar Land, Texas}\par\nopagebreak
  \textit{E-mail address}: \texttt{stemanant@gmail.com}

  \medskip

  S.~Goyal, \textsc{McNeil High School, Austin, Texas}\par\nopagebreak
  \textit{E-mail address}: \texttt{shreev.goyal@gmail.com}

  \medskip

  A.~Dochtermann, \textsc{Department of Mathematics,
    Texas State University, San Marcos, Texas}\par\nopagebreak
  \textit{E-mail address}: \texttt{dochtermann@txstate.edu}

}

%https://www.overleaf.com/project/630e9588c38e7a636d3f3c65

    \newtheorem{thm}{Theorem}[subsection]

    \newtheorem{prop}{Proposition}[section]
    \newtheorem{conj}{Conjecture}[section]
    \newtheorem{question}[thm]{Question}

    \theoremstyle{remark}
    \newtheorem{defn}{Definition}[section]

    \title{Rainbow Stars and Rota's Basis Conjecture for Graphic Matroids}

%\thanks{*Oversaw research}
\date{October 2023}

\begin{document}

\maketitle

\begin{center}

\textsc{Anant Asthana, Shreev Goyal}

\vspace{1.5mm}

\textsc{\footnotesize Mentored by Dr. Anton Dochtermann}

\end{center}

\begin{abstract}
Let $G$ be a connected multigraph with $n$ vertices, and suppose $G$ has been edge-colored with $n-1$ colors so that each color class induces a spanning tree. Rota's Basis Conjecture for graphic matroids posits that one can find $n-1$ mutually edge-disjoint rainbow spanning trees. In a recent paper, Maezawa and Yazawa have shown that the conjecture holds if one assumes that the color classes induce spanning stars. We delve further into the star case to explore some extreme subcases including: all stars with different centers, the same center, or one of two centers. In addition, we identify the cases in which a graph composed of monochromatic stars can be decomposed into rainbow stars. We also show that the statement is false if one replaces `stars' with `paths'. 
%Finally, we show that in general, a graph induced by identical monochromatic trees can be decomposed into rainbow trees.
\end{abstract}

\section{Introduction}
Rota's basis conjecture is a well-known open question in matroid theory that involves rearranging elements in a given set of bases to produce other `rainbow' bases. The statement of the conjecture first appeared in a paper of Huang and Rota \cite{HuaRot}. For the case of graphic matroids (which is our main focus here), the conjecture can be stated as follows.

\begin{conj}\label{conj:main}
 Let $G$ be a connected graph on $n$ vertices, and suppose $G$ has been edge colored with $n-1$ colors so that each color class induces a monochromatic spanning tree. Then $G$ can be decomposed into $n-1$ disjoint rainbow spanning trees.
 \end{conj}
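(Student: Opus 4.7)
The plan is to attack Conjecture~\ref{conj:main} by combining the matroid union theorem with an iterative exchange (augmentation) argument. Let $T_1,\dots,T_{n-1}$ denote the monochromatic spanning trees given by the coloring. Since these are edge-disjoint, the Nash-Williams--Tutte theorem tells us that for every partition of $V(G)$ into $r$ parts, the number of crossing edges is at least $(n-1)(r-1)$, which is a strong connectivity hypothesis I expect to use repeatedly.

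First, I would construct a single rainbow spanning tree $R_1$. This is the easy part: pick a cyclic order on the colors and apply the standard matroid union / Edmonds-style augmenting path argument to select one edge $e_i\in T_i$ for each color $i$ so that $\{e_1,\dots,e_{n-1}\}$ is a spanning tree. Equivalently, view $G$ as the union of $n-1$ copies of the graphic matroid, each restricted to one color class, and use Edmonds' theorem to get one independent transversal of size $n-1$, which is exactly a rainbow spanning tree.

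The main body of the argument would then be an inductive step: assuming $R_1,\dots,R_k$ have been produced and removed, produce $R_{k+1}$ from the residual edges. The naive greedy fails because removing $R_1$ may destroy the property that each remaining color class spans $G$. My approach would therefore be a rebalancing swap: whenever removing $R_1$ leaves color class $i$ disconnected in some cut $S\subset V$, find an edge $f\in R_1$ of another color crossing $S$ and swap $f$ with a suitable edge of $T_i$, preserving the rainbow property of $R_1$ while restoring a spanning structure for the residual color classes. A bookkeeping invariant, perhaps that the residual graph still satisfies the Nash-Williams--Tutte inequality with parameter $n-1-k$, would drive the induction.

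The hard part, and the reason Conjecture~\ref{conj:main} has resisted attack since Huang--Rota, is precisely this rebalancing step: there is no known invariant that is preserved under removing a rainbow spanning tree and that is strong enough to guarantee another one exists. Partial resolutions (the star case proved by Maezawa--Yazawa and extended in this paper, Drisko's theorem for paving matroids, Geelen--Webb for small $n$, and Pokrovskiy's asymptotic $(1-o(1))n$ bound) all bypass this obstacle by restricting the structure of the color classes or by weakening the conclusion. I therefore expect a direct proof along the lines above to stall at the augmentation step, and a realistic version of this plan is to use the star-decomposition technique developed in the body of the paper to reduce the general tree case to the star case handled by Theorem of Maezawa--Yazawa, with the reduction itself being the genuinely new ingredient.
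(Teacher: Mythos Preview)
The statement you are attempting to prove is labeled \emph{Conjecture}~\ref{conj:main} in the paper precisely because the paper does not prove it; Rota's Basis Conjecture for graphic matroids is open. The paper only establishes special cases (all monochromatic stars with the same center, all with different centers, centers on at most two vertices, and the case where every color class induces the same spanning tree). There is therefore no ``paper's own proof'' to compare against.

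Your proposal is honest about this: you correctly identify that the rebalancing/augmentation step after removing a rainbow spanning tree is exactly where every known direct attack stalls, and you explicitly say you expect your plan to stall there too. That is an accurate assessment, but it means what you have written is a survey of obstacles rather than a proof. The final suggestion, to ``reduce the general tree case to the star case handled by Maezawa--Yazawa,'' is not something the paper does, and such a reduction would itself resolve the full conjecture for graphic matroids; no mechanism for carrying it out is indicated. In short, there is a genuine gap: the heart of the argument (either the invariant preserved under removing a rainbow tree, or the purported reduction to stars) is missing, and that gap is the entire content of the open problem.
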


We refer to Section \ref{sec:prelim} for any undefined terms. Note that Rota's Basis Conjecture is vacuous if the underlying graph $G$ is simple (the relevant number of disjoint spanning trees cannot exist), so here we are always working with graphs that have parallel edges.

According to Davies and McDiarmid \cite{K4}, graphs with no $K_4$ minor satisfy the conjecture. In a recent preprint, Maezawa and Yazawa \cite{MaeYaz} established Rota's Basis Conjecture for graphic matroids in the special case that each color class induces a monochromatic \emph{spanning star}. They prove that under this assumption $G$ can be decomposed into $n-1$ disjoint rainbow trees. Even for this special case, their proof is rather long and technical.

In this note we study special cases of Conjecture \ref{conj:main} involving arrangements of monochromatic stars and how one can find resulting rainbow spanning stars or trees. In particular, we consider the case of all stars having different centers, and the case of all stars having their centers among at most two vertices. 

Our main result states that if the color classes induce monochromatic stars, then one can find a collection of disjoint rainbow \emph{stars} if and only if all the given stars share the same center or all have different centers. See Theorem \ref{thm:main}. We also consider the case of induced paths and show, via a counterexample, that a similar statement does not hold in this case. See Proposition \ref{prop:path} for details. 

Along the way, we also consider the question of \emph{how many} collections of rainbow trees (or stars) can be found in a given edge colored graph (Rota's Conjecture states that at least one exists). We see that in some cases only one collection of rainbow stars can be found (see Proposition \ref{prop:onlyone}), and in other cases we observe a connection to Latin squares (see Theorem \ref{thm:latin}).

%\begin{prop}
%    For the edge coloring of the graph $G$ on $4$ vertices in Section \ref{section:paths}, there exists no collection of $3$ disjoint rainbow paths.
%\end{prop}

\section{Preliminaries}\label{sec:prelim}

 For us, a graph $G$ consists of a vertex set $V$ and a multiset of edges $E \subset V \times V$ where $e_{vw} \in E$ implies $w \neq v$ and $e_{wv} \in E$. Hence, our graphs are undirected and have no loops, but may (and usually will) have multiple edges. Unless otherwise specified, our graphs will be connected. 
 %For a connected graph $G$ the \emph{graphic matroid} $M(G)$ is the matroid with ground set $E$ and with bases given by the set of spanning trees.

 A \emph{tree} in $G$ is a set of edges $\tau \subset E$ with the property that the subgraph induced by $\tau$ has no closed paths (so in particular $\tau$ has no multiple edges). A \emph{star} in $G$ is a tree $S \subset E$ with the property that the subgraph induced by $S$ has a single vertex (called an \emph{center}) that is incident to every edge in $S$. A tree $\tau$ of $G$ is said to be \emph{spanning} (in $G$) if every vertex $v \in V$ is incident to some edge of $\tau$. Note that if $G$ is connected and has $n$ vertices, then a tree $\tau$ is spanning if and only if it consists of $n-1$ edges.

We will be interested in \emph{edge-colorings} of a graph $G$, by which we mean a function $f:E \rightarrow [m]$ where $m$ is some positive integer and $[m] = \{1,2, \dots, m\}$. A subgraph $H \subset G$ is said to be \emph{monochromatic} if every edge is assigned the same value, and is \emph{rainbow} if all edges are assigned different values.

Rota's Basis Conjecture tells us that if we are given $n-1$ monochromatic spanning trees on $n$ vertices, we can always find a set of $n-1$ rainbow spanning trees. For instance, in the example below, a graph on $n=4$ vertices composed of the top 3 monochromatic trees can be decomposed into the bottom 3 rainbow trees. Thus, this graph satisfies Rota's Basis Conjecture. 

\vspace{2mm}
% pic for rota's?
\begin{center} \begin{tikzpicture} [scale=0.75] 
\begin{adjustbox}{left} 
\draw[white] (0,0) grid (13,3); 
\draw[line width = 2.25, red] (0,0) -- (3,3); 
\draw[line width = 2.25, red] (0,3) -- (0,0); 
\draw[line width = 2.25, red] (3,3) -- (3,0); 
\fill[black] (0,3) circle [radius = 0.15]; 
\fill[black] (3,3) circle [radius = 0.15]; 
\fill[black] (0,0) circle [radius = 0.15]; 
\fill[black] (3,0) circle [radius = 0.15]; 
\draw[line width = 2.25, blue] (5,0) -- (8,0); 
\draw[line width = 2.25, blue] (5,0) -- (5,3); 
\draw[line width = 2.25, blue] (5,0) -- (8,3); 
\fill[black] (5,3) circle [radius = 0.15];
\fill[black] (8,3) circle [radius = 0.15]; 
\fill[black] (5,0) circle [radius = 0.15]; 
\fill[black] (8,0) circle [radius = 0.15]; 
\draw[line width = 2.25, green] (13,3) -- (13,0); 
\draw[line width = 2.25, green] (10,3) -- (13,0); 
\draw[line width = 2.25, green] (10,0) -- (13,0); 
\fill[black] (10,3) circle [radius = 0.15]; 
\fill[black] (13,3) circle [radius = 0.15]; 
\fill[black] (10,0) circle [radius = 0.15]; 
\fill[black] (13,0) circle [radius = 0.15]; 
\end{adjustbox}{left} \end{tikzpicture} 
\end{center} 

\vspace{5mm} 

\begin{center} \begin{tikzpicture} [scale=0.75] 
\begin{adjustbox}{left} 
\draw[white] (0,0) grid (13,3); 
\draw[line width = 2.25, blue] (0,0) -- (3,3); 
\draw[line width = 2.25, red] (0,3) -- (0,0); 
\draw[line width = 2.25, green] (3,3) -- (3,0); 
\fill[black] (0,3) circle [radius = 0.15]; 
\fill[black] (3,3) circle [radius = 0.15]; 
\fill[black] (0,0) circle [radius = 0.15]; 
\fill[black] (3,0) circle [radius = 0.15]; 
\draw[line width = 2.25, green] (5,0) -- (8,0); 
\draw[line width = 2.25, blue] (5,0) -- (5,3); 
\draw[line width = 2.25, red] (5,0) -- (8,3); 
\fill[black] (5,3) circle [radius = 0.15]; 
\fill[black] (8,3) circle [radius = 0.15]; 
\fill[black] (5,0) circle [radius = 0.15]; 
\fill[black] (8,0) circle [radius = 0.15]; 
\draw[line width = 2.25, red] (13,3) -- (13,0); 
\draw[line width = 2.25, green] (10,3) -- (13,0); 
\draw[line width = 2.25, blue] (10,0) -- (13,0); 
\fill[black] (10,3) circle [radius = 0.15]; 
\fill[black] (13,3) circle [radius = 0.15]; 
\fill[black] (10,0) circle [radius = 0.15]; 
\fill[black] (13,0) circle [radius = 0.15]; 
\end{adjustbox}{left} \end{tikzpicture} \end{center}
\vspace{2mm}

In fact, Rota's Basis Conjecture can be seen as the 'Trees-to-Trees' extension of 'Stars-to-Stars' that we make precise below.

\begin{defn}
Suppose $G$ is a graph on $n$ vertices that has been edge colored with $n-1$ colors so that each monochromatic subgraph induces a star. Then we say \textbf{Stars-to-stars}
 holds for $G$ if one can find a collection of $n-1$ disjoint rainbow stars.
\end{defn}

\begin{comment}
In this paper, we prove the following statements:
\begin{itemize}
    \item A special case of Conjecture \ref{conj:main} for a graph induced by monochromatic stars that all have different centers, the same center, or one of two centers
    \item Rainbow trees can be derived from a set of monochromatic trees if all the trees are the same 
    \item Rainbow stars can be derived from monochromatic stars only if all the stars share the same center or all have different centers
    \item Rainbow paths can be derived from monochromatic paths if all the paths are the same, but there is no guarantee that a set of monochromatic paths can lead to a set of rainbow paths 
\end{itemize}
\end{comment}

\section{Stars-to-Stars and Stars-to-Trees}
\subsection{All Stars Have a Different Center}

\begin{thm}\label{thm:starsdifferent}
    Stars-to-Stars holds if all monochromatic stars have a different center.
\end{thm}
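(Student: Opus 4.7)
The plan is to write down an explicit decomposition of $G$ into $n-1$ rainbow spanning stars; the resulting collection in particular witnesses Stars-to-Stars. Label the centers of the monochromatic stars of colors $1, \ldots, n-1$ as $c_1, \ldots, c_{n-1}$, and let $v_0$ denote the unique vertex that is not a center. Since each monochromatic star is spanning, the edge set of $G$ consists precisely of the $(n-1)^2$ edges $\{c_i, v\}$ of color $i$, one for each color $i$ and each $v \neq c_i$. In particular, any two centers $c_i, c_j$ are joined by two parallel edges (one of color $i$, one of color $j$), while $v_0$ is joined to each $c_i$ by a single edge of color $i$.

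For each $i$, I would define $R_i$ to be the star centered at $c_i$ whose color-$j$ edge is $\{c_i, c_j\}$ for every $j \neq i$ and whose color-$i$ edge is $\{c_i, v_0\}$. By construction each $R_i$ is a rainbow spanning star: it uses each color exactly once and meets every vertex. The substantive step is verifying pairwise edge-disjointness, which is a short case check. The edge $\{c_i, c_j\}$ of color $j$ appears in the color-$j$ slot of $R_i$, and it is not used by $R_j$ (whose color-$j$ edge is $\{c_j, v_0\}$) nor by any $R_k$ with $k \notin \{i, j\}$ (whose color-$j$ edge is $\{c_k, c_j\}$); similarly $\{c_i, v_0\}$ of color $i$ appears only in $R_i$. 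An edge count $(n-1)\cdot(n-1) = (n-1)^2 = |E(G)|$ then shows that the $R_i$ in fact partition $E(G)$.

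The main ``obstacle'' here is not technical but conceptual: one needs to see why this construction is the natural one. A brief forcing argument does the job. Since $\deg(v_0) = n-1$ with exactly one edge of each color, $v_0$ must be a leaf in every star of any such decomposition, so each rainbow star is centered at some $c_i$; the degree $2n-3$ at each $c_i$ then forces exactly one rainbow star to be centered at each $c_i$. The only color-$j$ edge at $c_i$ for $j \neq i$ is $\{c_i, c_j\}$, and disjointness between $R_i$ and $R_j$ forces the color-$i$ edge of $R_i$ to terminate at $v_0$. This both shows the decomposition is essentially unique and makes the construction above feel inevitable rather than ad hoc; I would include one sentence to this effect in the written proof, then present the construction and the disjointness verification in full.
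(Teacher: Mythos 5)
Your construction is exactly the one in the paper: the rainbow star centered at $c_i$ takes the color-$i$ edge down to the non-center vertex and, for each other center $c_j$, the color-$j$ edge of the star $S_j$ joining $c_j$ to $c_i$. The proposal is correct and in fact somewhat more careful than the paper's write-up (the explicit disjointness check and the forcing/uniqueness remark, the latter of which the paper records separately as Proposition \ref{prop:onlyone}).
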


\begin{proof}
%We prove Conjecture \ref{conj:main} for the case where each induced monochromatic spanning star has a different center.%; if this requirement is not met, then Conjecture \ref{conj:main} may not hold. A counterexample is provided in Section \ref{sec:counter}. 

Suppose $G$ is a connected graph on vertex set $V = \{v_1,\ldots,v_n\}$ and edge set $E$. Suppose $G$ has been edge colored with $n-1$ colors so that each color class induces a monochromatic star. Without loss of generality, suppose $v_1, \dots, v_{n-1}$ are the internal nodes of these stars. 

%WHY CAN'T ONE VERTEX BE THE INTERNAL NODE OF TWO STARS? If this is the case we know we have two copies of the same star, with all parallel edges.  In this case I don't think we are guaranteed to have disjoint rainbow stars, but hopefully it's easy to see that we have disjoint rainbow spanning trees?

Now, we claim that this graph can be decomposed into $n-1$ rainbow stars.
%Arrange the vertices $v_1, \dots, v_{n-1}$ in a circle with $v_n$ placed in the center. 
Let $c_j$ denote the color of the monochromatic star with center at $v_j$; we will denote the whole monochromatic star as $S_j$. To construct $\textsc{RS}_j$, the rainbow star with center at vertex $j$, we do the following: 
\begin{enumerate}
    \item We choose the edge $e_{jn} \in S_j$ to connect node $v_n$ to $v_j$ such that $e_{jn}$ has color $c_j$.
    \item Then, for any node $v_i$ such that $i \neq n,j$, we choose the edge $e_{ji} \in S_i$ between $v_i$ and $v_j$ to connect node $i$ to $j$.
\end{enumerate}
Thus, $$\textsc{RS}_j = \{V \cup e_{ji} \mid i \neq n, i \neq j, e_{ji} \in S_i, \text{ and } e_{jn} \in S_j\}.$$ 

\vspace{4mm}

For instance, consider the following graph on $n=4$ vertices:

\begin{center}
\begin{tikzpicture}[scale=0.85] 

\draw[white] (0,0) grid (8,3);

\draw[line width = 2.5, red] (4,2/3*1.73205) -- (4,2*1.73205);
\draw[line width = 2.5, blue] (4,2/3*1.73205) -- (2,0);
\draw[line width = 2.5, green] (4,2/3*1.73205) -- (6,0);

\draw[line width = 2.5, red] (4,2*1.73205) -- (6,0);
\draw[line width = 2.5, blue] (2,0) -- (4,2*1.73205);
\draw[line width = 2.5, green] (6,0) -- (2,0);

\draw[line width = 2.5, red] (4,2*1.73205) arc (90:210:4/3*1.73205);
\draw[line width = 2.5, blue] (2,0) arc (210:330:4/3*1.73205);
\draw[line width = 2.5, green] (6,0) arc (-30:90:4/3*1.73205);

\fill[red] (4,2*1.73205) circle [radius = 0.2];
\fill[blue] (2,0) circle [radius = 0.2];
\fill[green] (6,0) circle [radius = 0.2];
\fill[black] (4,2/3*1.73205) circle [radius = 0.2];

\end{tikzpicture}
\end{center}

The nodes of this graph have already been rearranged as detailed above. The center vertex has also been colored black to distinguish it from the centers of the monochromatic spanning stars. Notice that this graph is induced by the following $3$ monochromatic stars:
\vspace{2mm}

\begin{center}
\begin{tikzpicture}[scale=0.9] 

\draw[white] (0,0) grid (13,2.598076);

\draw[line width = 2.25, red] (1.5,1.5*1.73205) -- (1.5,.5*1.73205);
\draw[line width = 2.25, red] (1.5,1.5*1.73205) -- (3,0);
\draw[line width = 2.25, red] (1.5,1.5*1.73205) arc (90:210:1*1.73205);

\fill[red] (1.5,1.5*1.73205) circle [radius = 0.15];
\fill[blue] (0,0) circle [radius = 0.15];
\fill[green] (3,0) circle [radius = 0.15];
\fill[black] (1.5,.5*1.73205) circle [radius = 0.15];

%---

\draw[line width = 2.25, blue] (5,0) -- (6.5,1.5*1.73205);
\draw[line width = 2.25, blue] (5,0) -- (6.5,.5*1.73205);
\draw[line width = 2.25, blue] (5,0) arc (210:330:1*1.73205);

\fill[red] (6.5,1.5*1.73205) circle [radius = 0.15];
\fill[blue] (5,0) circle [radius = 0.15];
\fill[green] (8,0) circle [radius = 0.15];
\fill[black] (6.5,.5*1.73205) circle [radius = 0.15];

%---

\draw[line width = 2.25, green] (13,0) -- (10,0);
\draw[line width = 2.25, green] (13,0) -- (11.5,.5*1.73205);
\draw[line width = 2.25, green] (13,0) arc (-30:90:1*1.73205);

\fill[red] (11.5,1.5*1.73205) circle [radius = 0.15];
\fill[blue] (10,0) circle [radius = 0.15];
\fill[green] (13,0) circle [radius = 0.15];
\fill[black] (11.5,.5*1.73205) circle [radius = 0.15];

\end{tikzpicture}
\end{center}

%----------------------------

In addition, the center of each star has been colored with the same color as the edges of that monochromatic star. We follow the algorithm outlined in the proof to dissect the construction into 3 rainbow stars, as shown below.

%Now, we claim that this graph can be decomposed into $n-1$ rainbow stars. We do this in the following manner. First, let $c_j$ denote the color of the monochromatic star with center at $v_j$; we will denote the whole monochromatic star as $S_j$. To construct $\textsc{RS}_j$, or the rainbow star with center at node $j$, we do the following: first, we choose the edge $e_{jn} \in S_j$ to connect node $v_n$ to $v_j$ such that $e_{jn}$ has color $c_j$. Then, for any node $v_i$ such that $i \neq n,j$, we choose the edge $e_{ji} \in S_i$ between $v_i$ and $v_j$ to connect node $i$ to $j$. Thus,\\ $$\textsc{RS}_j = \{V \cup e_{jm} : \text{ if } m \neq n,j, e_{jm} \in S_m, \text{ and } e_{jn} \in S_j\}.$$ 

%\vspace{4mm}

%In the case of $n=4$, the $3$ rainbow spanning stars produced by the algorithm above are as follows:

%----------------------------

%We follow the algorithm outlined in the proof to dissect the construction into 3 rainbow stars, as shown below.\\

\begin{center}
\begin{tikzpicture}[scale=0.9] 

\draw[white] (0,0) grid (13,2.598076);

\draw[line width = 2.25, red] (1.5,1.5*1.73205) -- (1.5,.5*1.73205);
\draw[line width = 2.25, blue] (0,0) -- (1.5,1.5*1.73205);
\draw[line width = 2.25, green] (3,0) arc (-30:90:1*1.73205);

\fill[red] (1.5,1.5*1.73205) circle [radius = 0.15];
\fill[blue] (0,0) circle [radius = 0.15];
\fill[green] (3,0) circle [radius = 0.15];
\fill[black] (1.5,.5*1.73205) circle [radius = 0.15];

%---

\draw[line width = 2.25, red] (6.5,1.5*1.73205) arc (90:210:1*1.73205);
\draw[line width = 2.25, green] (5,0) -- (8,0);
\draw[line width = 2.25, blue] (5,0) -- (6.5,.5*1.73205);

\fill[red] (6.5,1.5*1.73205) circle [radius = 0.15];
\fill[blue] (5,0) circle [radius = 0.15];
\fill[green] (8,0) circle [radius = 0.15];
\fill[black] (6.5,.5*1.73205) circle [radius = 0.15];

%---

\draw[line width = 2.25, red] (11.5,1.5*1.73205) -- (13,0);
\draw[line width = 2.25, green] (13,0) -- (11.5,.5*1.73205);
\draw[line width = 2.25, blue] (10,0) arc (210:330:1*1.73205);

\fill[red] (11.5,1.5*1.73205) circle [radius = 0.15];
\fill[blue] (10,0) circle [radius = 0.15];
\fill[green] (13,0) circle [radius = 0.15];
\fill[black] (11.5,.5*1.73205) circle [radius = 0.15];

\end{tikzpicture}
\end{center}

%--------------------------------------

Note that all of these rainbow stars are disjoint and use all the edges of the original graph. In other words, all the rainbow stars are built by connecting a colored point and the center point with its own color, and then receiving a color from every other colored point. We can guarantee that all edges are used exactly once. Thus, the proof is complete. 
\end{proof}

%\pagebreak
%\newpage
\subsection*{Example with 5 nodes}

%Below is the set of $3$ monochromatic stars, with the colors of the edges matching the color of the center. =

%the same process can be used to take a graph with 5 points and therefore 4 monochromatic stars, and  create 4 rainbow stars
The same process can be used to decompose a graph with $5$ vertices (induced by $4$ monochromatic spanning stars) to create $4$ rainbow stars, as depicted below.

\begin{center}
\begin{tikzpicture}[scale=0.55] 
\begin{adjustbox}{left}

\draw[white] (0,0) grid (8,8);

\draw[line width = 2.5, red] (0,8) -- (8,8);
\draw[line width = 2.5, yellow] (8,8) -- (8,0);
\draw[line width = 2.5, green] (8,0) -- (0,0);
\draw[line width = 2.5, blue] (0,0) -- (0,8);

\draw[line width = 2.5, red] (0,8) -- (4,4);
\draw[line width = 2.5, yellow] (8,8) -- (4,4);
\draw[line width = 2.5, green] (8,0) -- (4,4);
\draw[line width = 2.5, blue] (0,0) -- (4,4);

\draw[line width = 2.5, red] (0,8) arc (90+30.96376:360-30.96376:5.83095);
\draw[line width = 2.5, yellow] (8,8) arc (30.96376:270-30.96376:5.83095);
\draw[line width = 2.5, green] (8,0) arc (-90+30.96376:180-30.96376:5.83095);
\draw[line width = 2.5, blue] (0,0) arc (-180+30.96376:90-30.96376:5.83095);

\draw[line width = 2.5, red] (0,8) arc (45:-45:5.65685);
\draw[line width = 2.5, yellow] (8,8) arc (-45:-135:5.65685);
\draw[line width = 2.5, green] (8,0) arc (225:135:5.65685);
\draw[line width = 2.5, blue] (0,0) arc (135:45:5.65685);

\fill[red] (0,8) circle [radius = 0.25];
\fill[blue] (0,0) circle [radius = 0.25];
\fill[green] (8,0) circle [radius = 0.25];
\fill[yellow] (8,8) circle [radius = 0.25];
\fill[black] (4,4) circle [radius = 0.25];

\end{adjustbox}
\end{tikzpicture}
\end{center}

%------------------------------------------------

%the graph above is what a pre-disseced graph would look like with 5 points and 4 monochromatic stars
%This is an example of a graph with $5$ vertices. 

\begin{center}
\begin{tikzpicture}[scale=0.55] 
\begin{adjustbox}{left}

\draw[white] (0,0) grid (13,11);

\draw[line width = 2.5, red] (0,11) -- (4,11);
\draw[line width = 2.5, red] (0,11) -- (2,9);
\draw[line width = 2.5, red] (0,11) arc (90+30.96376:360-30.96376:5.83095/2);
\draw[line width = 2.5, red] (0,11) arc (45:-45:5.65685/2);

\draw[line width = 2.5, yellow] (13,11) -- (13,7);
\draw[line width = 2.5, yellow] (13,11) -- (11,9);
\draw[line width = 2.5, yellow] (13,11) arc (30.96376:270-30.96376:5.83095/2);
\draw[line width = 2.5, yellow] (13,11) arc (-45:-135:5.65685/2);

\draw[line width = 2.5, blue] (0,0) -- (0,4);
\draw[line width = 2.5, blue] (0,0) -- (2,2);
\draw[line width = 2.5, blue] (0,0) arc (-180+30.96376:90-30.96376:5.83095/2);
\draw[line width = 2.5, blue] (0,0) arc (135:45:5.65685/2);

\draw[line width = 2.5, green] (13,0) -- (9,0);
\draw[line width = 2.5, green] (13,0) -- (11,2);
\draw[line width = 2.5, green] (13,0) arc (-90+30.96376:180-30.96376:5.83095/2);
\draw[line width = 2.5, green] (13,0) arc (225:135:5.65685/2);

\fill[red] (0,11) circle [radius = 0.2];
\fill[blue] (0,7) circle [radius = 0.2];
\fill[green] (4,7) circle [radius = 0.2];
\fill[yellow] (4,11) circle [radius = 0.2];
\fill[black] (2,9) circle [radius = 0.2];

\fill[red] (9,11) circle [radius = 0.2];
\fill[blue] (9,7) circle [radius = 0.2];
\fill[green] (13,7) circle [radius = 0.2];
\fill[yellow] (13,11) circle [radius = 0.2];
\fill[black] (11,9) circle [radius = 0.2];

\fill[red] (0,4) circle [radius = 0.2];
\fill[blue] (0,0) circle [radius = 0.2];
\fill[green] (4,0) circle [radius = 0.2];
\fill[yellow] (4,4) circle [radius = 0.2];
\fill[black] (2,2) circle [radius = 0.2];

\fill[red] (9,4) circle [radius = 0.2];
\fill[blue] (9,0) circle [radius = 0.2];
\fill[green] (13,0) circle [radius = 0.2];
\fill[yellow] (13,4) circle [radius = 0.2];
\fill[black] (11,2) circle [radius = 0.2];

\end{adjustbox}
\end{tikzpicture}
\end{center}

%----------------------------------------------------------------------------------
%This is what the 4 monochromatic stars from the 5 pointed graph looks like
Shown above are the $4$ monochromatic stars that comprise the graph in the previous figure. %The same process used to dissect the graph on $4$ vertices into rainbow spanning stars can be employed here to decompose this graph on $5$ vertices into rainbow spanning stars. 

Following the algorithm described in the previous proof, we connect each center of a $c$ color-induced star to the center vertex, and we connect that same center to the remaining vertices using the respective colors of the stars whose center is at that remaining vertex. 

\begin{center}
\begin{tikzpicture}[scale=0.55] 
\begin{adjustbox}{left}

\draw[white] (0,0) grid (13,11);

\draw[line width = 2.5, blue] (0,7) -- (0,11);
\draw[line width = 2.5, yellow] (4,11) arc (-45:-135:5.65685/2);
\draw[line width = 2.5, green] (4,7) arc (-90+30.96376:180-30.96376:5.83095/2);
\draw[line width = 2.5, red] (0,11) -- (2,9);

\draw[line width = 2.5, red] (9,11) -- (13,11);
\draw[line width = 2.5, blue] (9,7) arc (-180+30.96376:90-30.96376:5.83095/2);
\draw[line width = 2.5, green] (13,7) arc (225:135:5.65685/2);
\draw[line width = 2.5, yellow] (13,11) -- (11,9);

\draw[line width = 2.5, green] (0,0) -- (4,0);
\draw[line width = 2.5, yellow] (4,4) arc (30.96376:270-30.96376:5.83095/2);
\draw[line width = 2.5, red] (0,4) arc (45:-45:5.65685/2);
\draw[line width = 2.5, blue] (0,0) -- (2,2);

\draw[line width = 2.5, yellow] (13,4) -- (13,0);
\draw[line width = 2.5, red] (9,4) arc (90+30.96376:360-30.96376:5.83095/2);
\draw[line width = 2.5, blue] (9,0) arc (135:45:5.65685/2);
\draw[line width = 2.5, green] (13,0) -- (11,2);

\fill[red] (0,11) circle [radius = 0.2]; 
\fill[blue] (0,7) circle [radius = 0.2];
\fill[green] (4,7) circle [radius = 0.2];
\fill[yellow] (4,11) circle [radius = 0.2];
\fill[black] (2,9) circle [radius = 0.2];

\fill[red] (9,11) circle [radius = 0.2];
\fill[blue] (9,7) circle [radius = 0.2];
\fill[green] (13,7) circle [radius = 0.2];
\fill[yellow] (13,11) circle [radius = 0.2];
\fill[black] (11,9) circle [radius = 0.2]; 

\fill[red] (0,4) circle [radius = 0.2];
\fill[blue] (0,0) circle [radius = 0.2];
\fill[green] (4,0) circle [radius = 0.2];
\fill[yellow] (4,4) circle [radius = 0.2];
\fill[black] (2,2) circle [radius = 0.2];

\fill[red] (9,4) circle [radius = 0.2];
\fill[blue] (9,0) circle [radius = 0.2];
\fill[green] (13,0) circle [radius = 0.2];
\fill[yellow] (13,4) circle [radius = 0.2];
\fill[black] (11,2) circle [radius = 0.2];

\end{adjustbox}
\end{tikzpicture}
\end{center}

%Shown above are the rainbow spanning stars that arise from the algorithm outlined in the proof.

This algorithm can be used for an arbitrary number of points.

In the context of Rota's Conjecture, a natural question to ask is \emph{how many} sets of rainbow spanning objects one can find in certain contexts. In the case of finding rainbow stars from monochromatic stars, we have the following observation.

\begin{prop}\label{prop:onlyone}
Suppose $G$ is a graph on $n$ vertices that has been edge colored with $n-1$ colors so that the induced monochromatic subgraphs are all stars with different centers. Then there exists a unique collection of $n-1$ disjoint spanning rainbow stars.
\end{prop}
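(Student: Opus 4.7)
The plan is to show that in any valid decomposition of $G$ into $n-1$ disjoint spanning rainbow stars, each rainbow star is forced to coincide with the explicit $\textsc{RS}_j$ constructed in the proof of Theorem \ref{thm:starsdifferent}. Relabel the vertices so that $v_1,\dots,v_{n-1}$ are the centers of the monochromatic stars $S_1,\dots,S_{n-1}$, with $S_j$ of color $c_j$, and $v_n$ is the unique vertex that is not the center of any $S_j$.

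First I would analyze the edges incident to $v_n$. For each $j \in \{1,\dots,n-1\}$ the star $S_j$ contributes the single edge $e_{jn}$ of color $c_j$, and no other edges of $G$ are incident to $v_n$. Hence $v_n$ has exactly $n-1$ incident edges, one of each color. Since any spanning star must contain an edge at $v_n$, each of the $n-1$ rainbow stars in a decomposition uses at least one edge at $v_n$; with only $n-1$ edges available, every rainbow star uses exactly one edge at $v_n$. In particular, no rainbow star can be centered at $v_n$, since such a star would consume all $n-1$ edges at $v_n$ by itself. Therefore each rainbow star is centered at some $v_j$ with $j \in \{1,\dots,n-1\}$, and by pigeonhole exactly one rainbow star is centered at each $v_j$; call this star $\textsc{RS}_j$.

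Next I would show $\textsc{RS}_j$ is forced. Its edge to $v_n$ must be $e_{jn}$, which is the only edge between $v_j$ and $v_n$ and has color $c_j$. For any other vertex $v_i$ with $i \neq j,n$, exactly two parallel edges connect $v_j$ and $v_i$: one from $S_j$ of color $c_j$, and one from $S_i$ of color $c_i$. Since $\textsc{RS}_j$ already contains an edge of color $c_j$ (namely $e_{jn}$), the rainbow condition forces the edge from $v_j$ to $v_i$ in $\textsc{RS}_j$ to be the one of color $c_i$. This recovers the explicit star $\textsc{RS}_j$ from the proof of Theorem \ref{thm:starsdifferent}, proving uniqueness.

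The only real subtlety is the observation that the non-center vertex $v_n$ has exactly $n-1$ incident edges with all $n-1$ colors represented; once this is in hand, the pigeonhole step rules out a rainbow star at $v_n$ and the color-exclusion step pins down every remaining edge, so there is no combinatorial obstacle to overcome.
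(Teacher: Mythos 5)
Your proof is correct and follows essentially the same route as the paper's: rule out a rainbow star centered at the non-center vertex $v_n$ by counting the $n-1$ edges incident to it, then observe that each remaining rainbow star is forced edge-by-edge by the rainbow condition. If anything, your version is more carefully justified than the paper's three-bullet sketch; the only spot worth tightening is the phrase ``by pigeonhole exactly one rainbow star is centered at each $v_j$,'' which silently uses that two rainbow stars centered at the same $v_j$ would both need the unique edge $e_{jn}$, contradicting disjointness.
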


\begin{proof}
    If we take a closer look, we can identify a few points to consider: 
\begin{enumerate}[(a)]
\item The center node can not have a star because if it did, this would use up the edges connecting the center and the remaining trees could not connect the center.
\item Each of the colored stars on their nodes contain $n-1$ edges of its own color and 1 color edge from the other $n-2$ stars.;
\item Each star is created by combining the lone edge to the center plus the collection of edges from the other stars.
\end{enumerate}

This means there is one and exactly one way to create each star, so the total number of collections of disjoint spanning rainbow stars is one.
\end{proof}
%needs a comment here saying this is what the 4 rainbow stars look like

%needs to say this algorithm stands for an arbitrary number of points

%\section{Proofs if Some Stars have the Same Center}
\subsection{All Stars Have the Same Center}\label{sec:allsame}
\begin{thm}\label{thm:starssame}
    Stars-to-Stars holds if all monochromatic stars have the same center.
\end{thm}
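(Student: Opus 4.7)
The plan is to reduce the problem to the existence of Latin squares. Let $v_1$ be the common center of all monochromatic stars, and let $v_2, \dots, v_n$ be the remaining vertices. Because every color class is a spanning star centered at $v_1$, the edges of $G$ are precisely, for each $i \in \{2,\dots,n\}$, a collection of $n-1$ parallel edges between $v_1$ and $v_i$, one in each of the $n-1$ colors. In particular, the total edge count is $(n-1)^2$.

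First I would observe that any spanning star of $G$ must be centered at $v_1$: the only neighbor of any $v_i$ with $i \geq 2$ is $v_1$, so $v_i$ cannot be the center of a spanning star (it has no edges to $v_j$ for $j \neq 1$). Thus any candidate rainbow spanning star amounts to a choice, for each $i \in \{2,\dots,n\}$, of one of the $n-1$ parallel $v_1 v_i$-edges, with the additional constraint that the $n-1$ chosen edges receive pairwise distinct colors.

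Next I would encode a system of $n-1$ edge-disjoint rainbow spanning stars as an $(n-1) \times (n-1)$ array $L$, whose rows are indexed by the stars $R_1, \dots, R_{n-1}$ and whose columns are indexed by the vertices $v_2, \dots, v_n$, with $L(j,i)$ being the color of the edge chosen in $R_j$ at vertex $v_i$. The rainbow condition on each $R_j$ is exactly the statement that each row of $L$ contains every color, while the disjointness condition (the $n-1$ parallel edges at $v_i$ of distinct colors must each be used in exactly one star) is exactly the statement that each column of $L$ contains every color. Hence the desired decomposition is equivalent to producing a Latin square of order $n-1$ on the color set $[n-1]$.

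Finally I would conclude by exhibiting any Latin square of order $n-1$, for instance the Cayley table of $\mathbb{Z}/(n-1)\mathbb{Z}$ given by $L(j,i) = (i+j) \bmod (n-1)$, which trivially exists for all $n \geq 2$. There is no real obstacle here: the only subtle point is the initial structural observation that every spanning star must be centered at $v_1$, which rules out other configurations and forces the Latin-square reformulation. This reduction also sets up the connection to Latin squares alluded to in Theorem \ref{thm:latin}, since counting decompositions reduces to counting Latin squares of the appropriate order.
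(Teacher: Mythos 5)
Your proposal is correct and is essentially the paper's argument: the paper's construction $RS_j = \{e_{v_0v_{j+k-1}} \in S_k\}$ with indices taken mod $n-1$ is exactly the cyclic Latin square $L(j,i)=(i+j)\bmod (n-1)$ that you exhibit. Your explicit Latin-square framing is slightly more formal (and anticipates the counting result in Theorem \ref{thm:latin}, which the paper proves separately), but the underlying decomposition is identical.
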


% definition of latin square 
\begin{proof}
Say a graph $G = (V,E)$ satisfies the conditions of the statement. We will construct the resulting rainbow stars. Let $v_i$ denote any given vertex of the graph, with $v_0$ signifying the common center of all the monochromatic stars. For $1 \le j \le n-1$, we define the $j$th resulting star $RS_j \subset E$ in the following manner:  
\[RS_j = \{e_{v_0v_{j+k-1}} \mid e_{v_0v_{j+k-1}} \in S_k, 1 \le k \le n-1\},\] where $j+k-1$ is taken mod $n-1$. 
\end{proof}

For instance, in the example below on 4 vertices and 3 colors, the following trees result. Notice how the colors are merely rotated between trees.

\begin{center}
\begin{tikzpicture}[scale=0.75] 
\begin{adjustbox}{left}
\draw[white] (0,0) grid (13,3);

\draw[line width = 2.25, red] (1.5,1.5*1.73205) -- (1.5,0);
\draw[line width = 2.25, red] (1.5,1.5*1.73205) -- (3,0);
\draw[line width = 2.25, red] (1.5,1.5*1.73205) -- (0,0);

\fill[black] (1.5,1.5*1.73205) circle [radius = 0.15];
\fill[black] (0,0) circle [radius = 0.15];
\fill[black] (3,0) circle [radius = 0.15];
\fill[black] (1.5,0) circle [radius = 0.15];

%---

\draw[line width = 2.25, blue] (6.5,1.5*1.73205) -- (5,0);
\draw[line width = 2.25, blue] (6.5,1.5*1.73205) -- (8,0);
\draw[line width = 2.25, blue] (6.5,1.5*1.73205) -- (6.5,0);

\fill[black] (6.5,1.5*1.73205) circle [radius = 0.15];
\fill[black] (5,0) circle [radius = 0.15];
\fill[black] (8,0) circle [radius = 0.15];
\fill[black] (6.5,0) circle [radius = 0.15];

%---

\draw[line width = 2.25, green] (11.5,1.5*1.73205) -- (10,0);
\draw[line width = 2.25, green] (11.5,1.5*1.73205) -- (11.5,0);
\draw[line width = 2.25, green] (11.5,1.5*1.73205) -- (13,0);

\fill[black] (11.5,1.5*1.73205) circle [radius = 0.15];
\fill[black] (10,0) circle [radius = 0.15];
\fill[black] (13,0) circle [radius = 0.15];
\fill[black] (11.5,0) circle [radius = 0.15];
\end{adjustbox}{left}
\end{tikzpicture}
\end{center}

\begin{center}
\begin{tikzpicture}[scale=0.75] 
\begin{adjustbox}{left}
\draw[white] (0,0) grid (13,3);

\draw[line width = 2.25, blue] (1.5,1.5*1.73205) -- (1.5,0);
\draw[line width = 2.25, green] (1.5,1.5*1.73205) -- (3,0);
\draw[line width = 2.25, red] (1.5,1.5*1.73205) -- (0,0);

\fill[black] (1.5,1.5*1.73205) circle [radius = 0.15];
\fill[black] (0,0) circle [radius = 0.15];
\fill[black] (3,0) circle [radius = 0.15];
\fill[black] (1.5,0) circle [radius = 0.15];

%---

\draw[line width = 2.25, green] (6.5,1.5*1.73205) -- (5,0);
\draw[line width = 2.25, blue] (6.5,1.5*1.73205) -- (8,0);
\draw[line width = 2.25, red] (6.5,1.5*1.73205) -- (6.5,0);

\fill[black] (6.5,1.5*1.73205) circle [radius = 0.15];
\fill[black] (5,0) circle [radius = 0.15];
\fill[black] (8,0) circle [radius = 0.15];
\fill[black] (6.5,0) circle [radius = 0.15];

%---

\draw[line width = 2.25, blue] (11.5,1.5*1.73205) -- (10,0);
\draw[line width = 2.25, green] (11.5,1.5*1.73205) -- (11.5,0);
\draw[line width = 2.25, red] (11.5,1.5*1.73205) -- (13,0);

\fill[black] (11.5,1.5*1.73205) circle [radius = 0.15];
\fill[black] (10,0) circle [radius = 0.15];
\fill[black] (13,0) circle [radius = 0.15];
\fill[black] (11.5,0) circle [radius = 0.15];
\end{adjustbox}{left}
\end{tikzpicture}
\end{center}

Once again we are interested in how many sets of rainbow spanning trees can be found in this context, and in particular we consider the following.

\begin{question}
    From a configuration with all monochromatic stars sharing the same center, how many collections of $n-1$ spanning rainbow stars are possible?
\end{question}
Returning to the $n=4$ example where there are 3 rainbow stars, there exist exactly 2 distinct unordered collections, as illustrated below.
\vspace{2mm}

\begin{center}
\begin{tikzpicture}[scale=0.75] 
\begin{adjustbox}{left}
\draw[white] (0,0) grid (13,3);

\draw[line width = 2.25, blue] (1.5,1.5*1.73205) -- (1.5,0);
\draw[line width = 2.25, green] (1.5,1.5*1.73205) -- (3,0);
\draw[line width = 2.25, red] (1.5,1.5*1.73205) -- (0,0);

\fill[black] (1.5,1.5*1.73205) circle [radius = 0.15];
\fill[black] (0,0) circle [radius = 0.15];
\fill[black] (3,0) circle [radius = 0.15];
\fill[black] (1.5,0) circle [radius = 0.15];

%---

\draw[line width = 2.25, green] (6.5,1.5*1.73205) -- (5,0);
\draw[line width = 2.25, blue] (6.5,1.5*1.73205) -- (8,0);
\draw[line width = 2.25, red] (6.5,1.5*1.73205) -- (6.5,0);

\fill[black] (6.5,1.5*1.73205) circle [radius = 0.15];
\fill[black] (5,0) circle [radius = 0.15];
\fill[black] (8,0) circle [radius = 0.15];
\fill[black] (6.5,0) circle [radius = 0.15];

%---

\draw[line width = 2.25, blue] (11.5,1.5*1.73205) -- (10,0);
\draw[line width = 2.25, green] (11.5,1.5*1.73205) -- (11.5,0);
\draw[line width = 2.25, red] (11.5,1.5*1.73205) -- (13,0);

\fill[black] (11.5,1.5*1.73205) circle [radius = 0.15];
\fill[black] (10,0) circle [radius = 0.15];
\fill[black] (13,0) circle [radius = 0.15];
\fill[black] (11.5,0) circle [radius = 0.15];
\end{adjustbox}{left}
\end{tikzpicture}
\end{center}
\vspace{1mm}
\begin{center}
\begin{tikzpicture}[scale=0.75] 
\begin{adjustbox}{left}
\draw[white] (0,0) grid (13,3);

\draw[line width = 2.25, green] (1.5,1.5*1.73205) -- (1.5,0);
\draw[line width = 2.25, blue] (1.5,1.5*1.73205) -- (3,0);
\draw[line width = 2.25, red] (1.5,1.5*1.73205) -- (0,0);

\fill[black] (1.5,1.5*1.73205) circle [radius = 0.15];
\fill[black] (0,0) circle [radius = 0.15];
\fill[black] (3,0) circle [radius = 0.15];
\fill[black] (1.5,0) circle [radius = 0.15];

%---

\draw[line width = 2.25, blue] (6.5,1.5*1.73205) -- (5,0);
\draw[line width = 2.25, green] (6.5,1.5*1.73205) -- (8,0);
\draw[line width = 2.25, red] (6.5,1.5*1.73205) -- (6.5,0);

\fill[black] (6.5,1.5*1.73205) circle [radius = 0.15];
\fill[black] (5,0) circle [radius = 0.15];
\fill[black] (8,0) circle [radius = 0.15];
\fill[black] (6.5,0) circle [radius = 0.15];

%---

\draw[line width = 2.25, green] (11.5,1.5*1.73205) -- (10,0);
\draw[line width = 2.25, blue] (11.5,1.5*1.73205) -- (11.5,0);
\draw[line width = 2.25, red] (11.5,1.5*1.73205) -- (13,0);

\fill[black] (11.5,1.5*1.73205) circle [radius = 0.15];
\fill[black] (10,0) circle [radius = 0.15];
\fill[black] (13,0) circle [radius = 0.15];
\fill[black] (11.5,0) circle [radius = 0.15];
\end{adjustbox}{left}
\end{tikzpicture}
\end{center}
\vspace{2mm}

For $n=2,3$, the number of collections is $1$. For $n=4$, the number of collections increases to $2$. For $n=5$, the number of collections jumps to 24. $n=6$ yields an even larger result that is much more difficult to calculate by hand (see table). 

In fact, we claim the following: 
\begin{thm} \label{thm:latin}
    For a graph on $n$ vertices where all induced spanning monochromatic stars share the same center, the number of collections of $n-1$ disjoint spanning rainbow stars is 
\[\Omega(n) = \frac{L_{n-1}}{(n-1)!},\]
\noindent    
where $L_{n-1}$ is the number of Latin squares of size $n-1$.    %\[\Omega(n) = \frac{L_{n-1}}{(n-1)!} = \sum_{A\epsilon{B_{n-1}}}^{}(-1)^{\sigma_0(A)}{{perA}\choose{n-1}}.\] 
\end{thm}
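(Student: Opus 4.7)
The plan is to set up a direct bijection between ordered collections of $n-1$ edge-disjoint spanning rainbow stars and Latin squares of order $n-1$, and then divide by $(n-1)!$ to account for orderings.

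First I would fix the structure of the graph. Let $v_0$ be the common center and $v_1,\dots,v_{n-1}$ the remaining vertices. Since each of the $n-1$ color classes is a spanning star centered at $v_0$, for every pair $(c,i)$ with $c\in[n-1]$ and $i\in[n-1]$ there is exactly one edge of color $c$ joining $v_0$ to $v_i$. Thus between $v_0$ and each $v_i$ there are exactly $n-1$ parallel edges, one of each color, and the total edge count of $G$ is $(n-1)^2$. Any rainbow spanning star must be centered at $v_0$ (the only vertex of degree $\ge n-1$ after deleting parallel edges of different colors), so it corresponds bijectively to a permutation $\sigma\in S_{n-1}$: namely, $\sigma(i)$ is the color of the edge from $v_0$ to $v_i$ selected in that star.

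Next I would encode a collection. Given an ordered collection of $n-1$ pairwise edge-disjoint rainbow spanning stars $\mathrm{RS}_1,\dots,\mathrm{RS}_{n-1}$, let $\sigma_j\in S_{n-1}$ be the permutation corresponding to $\mathrm{RS}_j$ and form the $(n-1)\times(n-1)$ matrix $M$ with $M[j,i]=\sigma_j(i)$. Each row of $M$ is a permutation because $\mathrm{RS}_j$ is rainbow. Each column of $M$ is a permutation because edge-disjointness forces each color-$c$ edge from $v_0$ to $v_i$ to appear in exactly one star (a total of $(n-1)\cdot(n-1)=(n-1)^2$ edges are consumed, exactly matching $|E(G)|$, so each edge is used once). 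Hence $M$ is a Latin square of order $n-1$. Conversely any such Latin square yields a valid ordered collection by reading its rows as the permutations defining the stars. This gives a bijection, so the number of ordered collections equals $L_{n-1}$.

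Finally I would pass from ordered to unordered. Since the $n-1$ stars in a single collection are pairwise edge-disjoint they are, in particular, pairwise distinct as edge sets, so the symmetric group $S_{n-1}$ acts freely on ordered collections by permuting the indices $1,\dots,n-1$. Each unordered collection therefore accounts for exactly $(n-1)!$ ordered collections, yielding
\[
\Omega(n)=\frac{L_{n-1}}{(n-1)!}.
\]
The only delicate step is verifying the bijection cleanly: one must confirm that edge-disjointness of rainbow stars forces the column condition of the Latin square (rather than merely row-wise rainbowness), and that the resulting $n-1$ stars indeed exhaust $E(G)$. This follows from the edge count above, so no further combinatorial obstruction arises.
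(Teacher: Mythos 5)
Your proposal is correct and follows essentially the same route as the paper: encode each rainbow star as a row recording the color of the edge to each leaf vertex, observe that rainbowness gives the row condition and edge-disjointness gives the column condition of a Latin square of order $n-1$, and divide by $(n-1)!$ to pass to unordered collections. Your version is in fact slightly more careful than the paper's, since you justify the division by noting that the stars in a collection are pairwise distinct, so the $S_{n-1}$-action on orderings is free.
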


\begin{proof}

The strategy involves representing the set of all edges of all stars as a matrix.

Consider each resulting star as a row of $n-1$ elements, where the $i$th element in the row is the color of the edge connecting $v_0$ and $v_i$. Considering all the resulting rainbow stars at once, we can combine $n-1$ rows/stars of $n-1$ elements each into a \textbf{rainbow star matrix}. In other words, the $i$th star would be $i$th row in the matrix, and the $j$th edge in the $i$th star would be the $j$th element in the $i$th row. 

For instance, in the example shown below, the leftmost star would represent the row $\{R,B,G\}$, the next would be $\{G,R,B\}$, and the last would be $\{B,G,R\}$, where R=red, B=blue, and G=green. 

\begin{center}
\begin{tikzpicture}[scale=0.75] 
\begin{adjustbox}{left}
\draw[white] (0,0) grid (13,3);

\draw[line width = 2.25, blue] (1.5,1.5*1.73205) -- (1.5,0);
\draw[line width = 2.25, green] (1.5,1.5*1.73205) -- (3,0);
\draw[line width = 2.25, red] (1.5,1.5*1.73205) -- (0,0);

\fill[black] (1.5,1.5*1.73205) circle [radius = 0.15];
\fill[black] (0,0) circle [radius = 0.15];
\fill[black] (3,0) circle [radius = 0.15];
\fill[black] (1.5,0) circle [radius = 0.15];

%---

\draw[line width = 2.25, green] (6.5,1.5*1.73205) -- (5,0);
\draw[line width = 2.25, blue] (6.5,1.5*1.73205) -- (8,0);
\draw[line width = 2.25, red] (6.5,1.5*1.73205) -- (6.5,0);

\fill[black] (6.5,1.5*1.73205) circle [radius = 0.15];
\fill[black] (5,0) circle [radius = 0.15];
\fill[black] (8,0) circle [radius = 0.15];
\fill[black] (6.5,0) circle [radius = 0.15];

%---

\draw[line width = 2.25, blue] (11.5,1.5*1.73205) -- (10,0);
\draw[line width = 2.25, green] (11.5,1.5*1.73205) -- (11.5,0);
\draw[line width = 2.25, red] (11.5,1.5*1.73205) -- (13,0);

\fill[black] (11.5,1.5*1.73205) circle [radius = 0.15];
\fill[black] (10,0) circle [radius = 0.15];
\fill[black] (13,0) circle [radius = 0.15];
\fill[black] (11.5,0) circle [radius = 0.15];

\end{adjustbox}{left}
\end{tikzpicture}
\end{center}

This gives us the matrix:
\begin{center}
\[
\begin{bmatrix}
R & B & G\\
G & R & B\\
B & G & R
\end{bmatrix}
\]
\end{center}

Notice that this rainbow star matrix possesses two special properties:

\begin{enumerate}
    \item Each element within a given row is distinct.
    \item Each element within a given column is distinct. 
\end{enumerate}

The first condition is due to the fact that every edge within a rainbow star must have a different color by definition. The second condition holds because an edge of a given color connecting the same pair of vertices cannot be used in two different rainbow stars (as per the definition of decomposition). 

Thus, such a rainbow star matrix is really just a Latin square of size $n-1$ in disguise. Therefore, the number of such rainbow star matrices is $L_{n-1}$, i.e. the number of size $n-1$ Latin squares. However, we ignore the order of the resulting trees, as this would be rearranging stars in a collection which wouldn't count as a distinct collection. If we were to fix a color onto the diagonal of the rainbow star matrix, (in the above case red), then the real number of collections would be ${L_{n-1}}$ divided by the number of ways to permute the number of rows, $(n-1)!$ Thus, the number of ways to decompose a graph  consisting of $n-1$ monochromatic stars sharing a center into rainbow stars is $\frac{L_{n-1}}{(n-1)!}$. 
\end{proof}

An explicit formula due to Shao and Wei \cite{ShaoWei} for ${L_n}$ is given below.

\begin{equation}
    L_n=n!\sum_{A\epsilon{B_n}}^{}(-1)^{\sigma_0(A)}\binom{{\text{per } A}}{{n}}
\end{equation}

Here, $B_n$ is the set of $n \times n$ matrices with entries in $\{0,1\}$, $\sigma_0(A)$ is the number of zero elements in $A$, and $\text{per } A$ is the permanent of the matrix $A$. Fitting this into our equation $\frac{L_{n-1}}{(n-1)!}$, the $(n-1)!$s cancel out nicely, leaving us with:

\begin{equation}
    \Omega(n) = \sum_{A\epsilon{B_{n-1}}}^{}(-1)^{\sigma_0(A)}\binom{{perA}}{{n-1}}
\end{equation}

Values for $\Omega(n)$ grow very quickly, as shown below. 

\begin{table}[h]
\renewcommand{\arraystretch}{1.5}
\begin{tabular}{|c|c|c|c|c|c|c|c|c|}
\hline
n    & 1 & 2 & 3 & 4 & 5  & 6    & 7       & 8           \\ \hline
$\Omega(n)$ & 1 & 1 & 1 & 2 & 24 & 1344 & 1128960 & 12198297600 \\ \hline

\end{tabular}
\end{table}

\subsection{Extension of \ref{sec:allsame} to Trees}

\begin{thm}
Suppose $G$ has been edge colored so that each color class induces $n-1$ copies of the same monochromatic spanning tree. Then  
Rota's Basis Conjecture holds.
\end{thm}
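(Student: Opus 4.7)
The plan is to observe that this setting reduces immediately to the Latin square viewpoint already developed in the proof of Theorem \ref{thm:latin}, and then to exhibit an explicit construction via a cyclic Latin square.

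First I would set up notation. Let $T$ denote the common spanning tree structure, and label its edges $e_1, e_2, \dots, e_{n-1}$ and the colors $c_1, c_2, \dots, c_{n-1}$. Since each color class induces a copy of $T$, every edge $e_i$ occurs in $G$ with multiplicity exactly $n-1$: one parallel copy in each color. Any spanning tree of $G$ must (after ignoring multiplicities) be supported on the edge set of $T$, since that is the only spanning-tree structure available. In particular, every candidate rainbow spanning tree is obtained by choosing, for each edge position $e_i$, one of its $n-1$ colored copies, subject to the constraint that all $n-1$ chosen colors are distinct.

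Next, exactly as in the proof of Theorem \ref{thm:latin}, I would encode a proposed decomposition into $n-1$ disjoint rainbow spanning trees $R_1, \dots, R_{n-1}$ as an $(n-1) \times (n-1)$ matrix $M$ whose $(k,i)$ entry records the color assigned to edge $e_i$ inside the rainbow tree $R_k$. The rainbow condition says each row of $M$ uses each color once, while disjointness (each colored copy of $e_i$ appears in at most one $R_k$) says each column of $M$ uses each color once. Thus a valid decomposition is precisely a Latin square of order $n-1$, and the existence of such a square (trivially, say the cyclic one) settles the theorem.

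To make this explicit, I would define
\[
R_k \;=\; \bigl\{\, \text{the copy of } e_i \text{ in color } c_{((i+k-2) \bmod (n-1))+1} \;:\; 1 \le i \le n-1 \,\bigr\}
\]
for $k = 1,\dots,n-1$. For fixed $k$, the map $i \mapsto ((i+k-2) \bmod (n-1))+1$ is a permutation of $\{1,\dots,n-1\}$, so $R_k$ is rainbow; and for fixed $i$, the same map (now in $k$) is a permutation, so each of the $n-1$ colored copies of $e_i$ is used in exactly one $R_k$. Since each $R_k$ is supported on the edge set of the spanning tree $T$, each $R_k$ is itself a spanning tree of $G$, completing the decomposition.

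There is essentially no obstacle here beyond recognizing the correct reduction: the hypothesis that all color classes induce the \emph{same} tree collapses the problem to choosing a color pattern on a single tree, at which point the Latin square argument of Theorem \ref{thm:latin} applies verbatim and any explicit Latin square yields the desired decomposition.
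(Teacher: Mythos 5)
Your proposal is correct and follows essentially the same route as the paper: the paper likewise reduces to the ``rainbow tree matrix'' viewpoint, assigning each edge of the common tree to a column and each rainbow tree to a row so that valid decompositions correspond exactly to Latin squares of order $n-1$, with the cyclic/rotation pattern from the identical-centers case furnishing an explicit one. Your write-up is in fact somewhat more careful than the paper's, since you spell out the explicit cyclic Latin square and verify the row and column permutation conditions rather than asserting that the earlier argument carries over.
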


Our argument stated in section 3.1 isn't restricted to just stars with the same center case. It can be extended to any collection of monochromatic spanning trees that are all the same. This is because for each edge on a rainbow tree, we pick edges from a set of $n-1$ edges on $n-1$ trees. Therefore, the same conversion to a rainbow matrix holds, and the Latin Square argument for identical stars can be used for identical trees (identical except for color, of course). The only difference is that we have to assign each edge to a column in the matrix, giving us a \textbf{rainbow tree matrix}.

We present an example of how our argument applies to a set of identical trees when $n=4$. 
\vspace{1mm}

\begin{center}
\begin{tikzpicture}[scale=0.75] 

\draw[white] (0,0) grid (13,3);

\draw[line width = 2.25, red] (3,0) -- (3,3);
\draw[line width = 2.25, red] (0,3) -- (0,0);
\draw[line width = 2.25, red] (0,3) -- (3,0);

%---

\draw[line width = 2.25, blue] (8,3) -- (8,0);
\draw[line width = 2.25, blue] (5,3) -- (8,0);
\draw[line width = 2.25, blue] (5,3) -- (5,0);

%---

\draw[line width = 2.25, green] (10,0) -- (10,3);
\draw[line width = 2.25, green] (10,3) -- (13,0);
\draw[line width = 2.25, green] (13,3) -- (13,0);

\fill[black] (0,3) circle [radius = 0.15];
\fill[black] (3,3) circle [radius = 0.15];
\fill[black] (0,0) circle [radius = 0.15];
\fill[black] (3,0) circle [radius = 0.15];

\fill[black] (10,3) circle [radius = 0.15];
\fill[black] (13,3) circle [radius = 0.15];
\fill[black] (10,0) circle [radius = 0.15];
\fill[black] (13,0) circle [radius = 0.15];

\fill[black] (5,3) circle [radius = 0.15];
\fill[black] (8,3) circle [radius = 0.15];
\fill[black] (5,0) circle [radius = 0.15];
\fill[black] (8,0) circle [radius = 0.15];
%color{A500FF}

\end{tikzpicture}
\end{center}
\vspace{2mm}
The two resulting collections of rainbow trees would look like this.
\vspace{2mm}

\begin{center}
\begin{tikzpicture}[scale=0.75] 

\draw[white] (0,0) grid (13,3);

\draw[line width = 2.25, green] (3,0) -- (3,3);
\draw[line width = 2.25, red] (0,3) -- (0,0);
\draw[line width = 2.25, blue] (0,3) -- (3,0);

%---

\draw[line width = 2.25, blue] (8,3) -- (8,0);
\draw[line width = 2.25, red] (5,3) -- (8,0);
\draw[line width = 2.25, green] (5,3) -- (5,0);

%---

\draw[line width = 2.25, blue] (10,0) -- (10,3);
\draw[line width = 2.25, green] (10,3) -- (13,0);
\draw[line width = 2.25, red] (13,3) -- (13,0);

\fill[black] (0,3) circle [radius = 0.15];
\fill[black] (3,3) circle [radius = 0.15];
\fill[black] (0,0) circle [radius = 0.15];
\fill[black] (3,0) circle [radius = 0.15];

\fill[black] (10,3) circle [radius = 0.15];
\fill[black] (13,3) circle [radius = 0.15];
\fill[black] (10,0) circle [radius = 0.15];
\fill[black] (13,0) circle [radius = 0.15];

\fill[black] (5,3) circle [radius = 0.15];
\fill[black] (8,3) circle [radius = 0.15];
\fill[black] (5,0) circle [radius = 0.15];
\fill[black] (8,0) circle [radius = 0.15];
%color{A500FF}

\end{tikzpicture}

\end{center}

\vspace{1mm}

\begin{center}
\begin{tikzpicture}[scale=0.75] 

\draw[white] (0,0) grid (13,3);

\draw[line width = 2.25, blue] (3,0) -- (3,3);
\draw[line width = 2.25, red] (0,3) -- (0,0);
\draw[line width = 2.25, green] (0,3) -- (3,0);

%---

\draw[line width = 2.25, green] (8,3) -- (8,0);
\draw[line width = 2.25, red] (5,3) -- (8,0);
\draw[line width = 2.25, blue] (5,3) -- (5,0);

%---

\draw[line width = 2.25, green] (10,0) -- (10,3);
\draw[line width = 2.25, blue] (10,3) -- (13,0);
\draw[line width = 2.25, red] (13,3) -- (13,0);

\fill[black] (0,3) circle [radius = 0.15];
\fill[black] (3,3) circle [radius = 0.15];
\fill[black] (0,0) circle [radius = 0.15];
\fill[black] (3,0) circle [radius = 0.15];

\fill[black] (10,3) circle [radius = 0.15];
\fill[black] (13,3) circle [radius = 0.15];
\fill[black] (10,0) circle [radius = 0.15];
\fill[black] (13,0) circle [radius = 0.15];

\fill[black] (5,3) circle [radius = 0.15];
\fill[black] (8,3) circle [radius = 0.15];
\fill[black] (5,0) circle [radius = 0.15];
\fill[black] (8,0) circle [radius = 0.15];
%color{A500FF}

\end{tikzpicture}
\end{center}
\vspace{2mm}
For the matrix, we let the first column be the left edge, the middle column to be the diagonal edge, and the right column to be the right edge. The corresponding rainbow tree matrices would be the following, where the left matrix represents the top collection and the right matrix represents the bottom matrix.

\begin{center}
\[
\begin{bmatrix}
R & B & G\\
G & R & B\\
B & G & R
\end{bmatrix}
\&
\begin{bmatrix}
R & G & B\\
B & R & G\\
G & B & R
\end{bmatrix}
\]
\end{center}
\vspace{2mm}
This method of counting the number of collections will hold true as long as all monochromatic spanning trees within a collection are identical.

\subsection{Stars-to-stars Can Fail}  \label{sec:counter}
We next consider a situation where monochromatic stars do not lead to rainbow stars.

Consider a graph $G$ on $4$ vertices, which is edge-colored in such a way that two of the induced monochromatic stars share a center. Below, we see that the red and blue stars share the top left corner as their center. We also run into the problem where we can't color our vertex according to its star, so we'll just mix the colors of the stars with internal vertices at that spot, conveniently red and blue make purple. We will mark the bottom left corner with a black vertex.
% -----START OF DIAGRAM 4-----
\vspace{2mm}
\begin{center}
\begin{tikzpicture}[scale=0.75] 
\begin{adjustbox}{left}
\draw[white] (0,0) grid (11,3);

\draw[line width = 2.25, red] (0,3) -- (3,3);
\draw[line width = 2.25, red] (0,3) -- (0,0);
\draw[line width = 2.25, red] (0,3) -- (3,0);

%---

\draw[line width = 2.25, blue] (5,3) -- (8,3);
\draw[line width = 2.25, blue] (5,3) -- (8,0);
\draw[line width = 2.25, blue] (5,3) -- (5,0);

%---

\draw[line width = 2.25, green] (13,3) -- (10,3);
\draw[line width = 2.25, green] (13,3) -- (10,0);
\draw[line width = 2.25, green] (13,3) -- (13,0);

\fill[violet] (0,3) circle [radius = 0.15];
\fill[green] (3,3) circle [radius = 0.15];
\fill[black] (0,0) circle [radius = 0.15];
\fill[black] (3,0) circle [radius = 0.15];

\fill[violet] (10,3) circle [radius = 0.15];
\fill[green] (13,3) circle [radius = 0.15];
\fill[black] (10,0) circle [radius = 0.15];
\fill[black] (13,0) circle [radius = 0.15];

\fill[violet] (5,3) circle [radius = 0.15];
\fill[green] (8,3) circle [radius = 0.15];
\fill[black] (5,0) circle [radius = 0.15];
\fill[black] (8,0) circle [radius = 0.15];
%color{A500FF}
\end{adjustbox}{left}
\end{tikzpicture}
\end{center}
% -----END OF DIAGRAM 4-----
\vspace{2mm}
There is only one possible way to decompose $G$ into 3 rainbow trees, ignoring symmetry between the red and blue edges: 
% -----START OF DIAGRAM 5-----
\vspace{2mm}
\begin{center}
\begin{tikzpicture}[scale=0.75] 
\begin{adjustbox}{left}
\draw[white] (0,0) grid (13,3);

\draw[line width = 2.25, red] (0,3) -- (0,0);
\draw[line width = 2.25, red] (5,3) -- (8,0);
\draw[line width = 2.25, red] (10,3) -- (13,3);

%---

\draw[line width = 2.25, blue] (0,3) -- (3,3);
\draw[line width = 2.25, blue] (5,3) -- (5,0);
\draw[line width = 2.25, blue] (10,3) -- (13,0);

%---

\draw[line width = 2.25, green] (3,3) -- (3,0);
\draw[line width = 2.25, green] (5,3) -- (8,3);
\draw[line width = 2.25, green] (10,0) -- (13,3);

\fill[violet] (0,3) circle [radius = 0.15];
\fill[green] (3,3) circle [radius = 0.15];
\fill[black] (0,0) circle [radius = 0.15];
\fill[black] (3,0) circle [radius = 0.15];

\fill[violet] (10,3) circle [radius = 0.15];
\fill[green] (13,3) circle [radius = 0.15];
\fill[black] (10,0) circle [radius = 0.15];
\fill[black] (13,0) circle [radius = 0.15];

\fill[violet] (5,3) circle [radius = 0.15];
\fill[green] (8,3) circle [radius = 0.15];
\fill[black] (5,0) circle [radius = 0.15];
\fill[black] (8,0) circle [radius = 0.15];
\end{adjustbox}{left}
\end{tikzpicture}
\end{center}
% -----END OF DIAGRAM 5-----
\vspace{2mm}

Notice that only one of the trees above is a rainbow star. Thus, it is impossible to decompose $G$ into rainbow stars. 

\subsection{All Stars Have a Center that Falls on One of Two Nodes}

Although an arbitrary arrangement of monochromatic stars does not necessarily lead to a collection of rainbow stars, we can give another proof of Rota's Conjecture in a special case.

\begin{thm}\label{thm:samecenter}
    Suppose a graph $G$ on $n$ vertices is edge colored such that it consists of $n-1$ monochromatic spanning stars that have a center on one of two vertices. Then, it is possible to decompose $G$ into $n-1$ disjoint spanning rainbow trees. 
\end{thm}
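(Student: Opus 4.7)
The plan is to reduce the decomposition to the existence of a Latin square of order $n-1$, echoing the encoding behind Theorem \ref{thm:latin}.

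\textbf{Setup.} Let $u$ and $v$ be the two possible centers. If all stars share a single center, Theorem \ref{thm:starssame} applies, so I would assume there are $k$ stars centered at $u$ and $n-1-k$ centered at $v$ with $1\le k\le n-2$. Label the colors so that colors $1,\dots,k$ have center $u$ and colors $k+1,\dots,n-1$ have center $v$, and write $w_1,\dots,w_{n-2}$ for the remaining vertices. The edge structure of $G$ is then transparent: $n-1$ parallel $u$-$v$ edges (one per color), $k$ parallel $u$-$w_i$ edges for each $i$ (one per color $\le k$), $n-1-k$ parallel $v$-$w_i$ edges for each $i$ (one per color $>k$), and no edges among the $w_i$'s; in particular $|E(G)|=(n-1)^2$, which matches the total edge count in $n-1$ spanning trees.

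\textbf{Construction via a Latin square.} I would pick any Latin square $L$ of order $n-1$ on the symbols $\{w_1,\dots,w_{n-2},\star\}$ (the cyclic square works), identify its rows with trees $T_1,\dots,T_{n-1}$ and its columns with colors $1,\dots,n-1$, and build $T_j$ by the rule: if $L[j][c]=w_i$, put in $T_j$ the color-$c$ edge joining $w_i$ to the center of color $c$ (namely $u$ when $c\le k$ and $v$ when $c>k$); if $L[j][c]=\star$, put in $T_j$ the color-$c$ edge between $u$ and $v$.

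\textbf{Verification.} Each row of $L$ contains every $w_i$ exactly once and exactly one $\star$, so $T_j$ consists of a direct $u$-$v$ edge together with $n-2$ leaf edges attaching each $w_i$ to either $u$ or $v$; this is a connected, rainbow spanning tree with $n-1$ edges. Each column of $L$ contains every $w_i$ once and a single $\star$, so as $j$ varies the color-$c$ edges are distributed bijectively: each color-$c$ edge is used in exactly one tree. Hence the $T_j$ are pairwise edge-disjoint and collectively exhaust $E(G)$.

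\textbf{Main obstacle.} The real work is conceptual rather than computational: one must recognize that it suffices to restrict attention to ``spined'' decompositions in which every $T_j$ contains some parallel $u$-$v$ edge, and that such decompositions correspond exactly to Latin squares of order $n-1$ on $\{w_1,\dots,w_{n-2},\star\}$. Once this correspondence is in place, existence is automatic since Latin squares of every order exist, giving an elegant analogue of Theorem \ref{thm:latin} for this two-centered case.
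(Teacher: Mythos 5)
Your construction is correct, and the trees you produce have exactly the same shape as the ones in the paper's proof: each rainbow tree consists of one parallel $u$--$v$ edge together with a pendant edge attaching each remaining vertex to whichever of $u,v$ is the center of that edge's color. The difference is in how the assignment is organized. The paper fixes an arbitrary cyclic ordering of the colors at each center and "rotates" through it, which amounts to choosing one particular Latin square; the verification there is somewhat informal ("being able to dictate which tree an edge is in guarantees disjointedness"). You instead make the edge structure of $G$ explicit, encode any candidate decomposition as an $(n-1)\times(n-1)$ array over the symbols $\{w_1,\dots,w_{n-2},\star\}$, and observe that the row and column conditions of a Latin square are precisely what is needed for each $T_j$ to be a rainbow spanning tree and for the family to partition $E(G)$. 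This buys a cleaner, fully checkable verification, and as a bonus it extends the counting result of Theorem \ref{thm:latin} to the two-center case (indeed every decomposition is forced to be "spined": there are $n-1$ parallel $u$--$v$ edges and $n-1$ trees, and no tree can contain two of them, so each tree contains exactly one, and then no $w_i$ can have degree $2$ without creating a cycle). Your closing remark slightly overstates the "obstacle" --- for existence you do not need to know that all decompositions are spined, only to exhibit one, which your cyclic Latin square does --- but nothing in the argument depends on that.
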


\begin{proof}
Let's suppose we have a graph $G$ with $n_1$ monochromatic stars centered on vertex $1$ and $n_2$ monochromatic stars centered on vertex $2$, where $n_1 + n_2 = n-1$. We want to decompose such a graph into $n-1$ rainbow trees. We will construct these $n-1$ rainbow trees from scratch. 

First, some notation: let $C_1,C_2$ denote the set of colors that belong to $1$ and $2$, respectively. Then, $\left| C_1 \right| = n_1$ and $\left| C_2 \right| = n_2$. 
For the vertices that serve as the centers of stars, we define an arbitrary 'order' of colors. For instance, 'for vertex $k$, blue comes clockwise after red'. We will also reorder the colors within $C_k$ and $C_j$ to reflect this arbitrary choice of color ordering. Then, construct $n-1$ trees using the following idea: 
\begin{itemize}
    \item For each tree, we choose the edge $e_{jk}$, where $c$ is distinct for each tree. 
    \item Pick any one of the trees-under-construction. Say that for one given tree, the edge connecting $k$ and $j$ is of the form $e_{jk}$, where $e_{jk} \in S_j$. Then, for the $n_1-1$ remaining colors in $C_j$, we place edges of the form $e_{j\delta}$, where $\delta = k+i$ and $e_{j\delta} \in S_i$; this way, the order of the colors of the edges emanating from $j$ matches the order that was arbitrarily determined previously. 
    \item After that, we will be left with $(n-1)-(n_1)=n_2$ points that are still not connected to a vertex. To rescue those points, we will connect them each to $k$; the order of the colors of the edges will follow the pre-determined order. 
\end{itemize}
Thus, we see a 'rotation' motif present throughout such constructs. 

For instance, say we have a graph on $n=5$ vertices that consists of the following monochromatic stars. 

% DIAGRAM 11111111111111111111111111

\begin{center}
\begin{tikzpicture}[scale=0.75] 
\begin{adjustbox}{left}
\draw[white] (0,0) grid (11,3);

\draw[line width = 2.25, red] (0,0) -- (1,3) ;
\draw[line width = 2.25, red] (2,0) -- (1,3);
\draw[line width = 2.25, red] (4,0) -- (1,3);
\draw[line width = 2.25, red] (3,3) -- (1,3);

\fill[black] (0,0) circle [radius = 0.15] node[below=4pt] {5};
\fill[black] (2,0) circle [radius = 0.15] node[below=4pt] {4};
\fill[black] (4,0) circle [radius = 0.15] node[below=4pt] {3};
\fill[black] (1,3) circle [radius = 0.15] node[above=4pt] {1};
\fill[black] (3,3) circle [radius = 0.15] node[above=4pt] {2};
%---

\draw[line width = 2.25, blue] (7,0) -- (8,3);
\draw[line width = 2.25, blue] (9,0) -- (8,3);
\draw[line width = 2.25, blue] (11,0) -- (8,3);
\draw[line width = 2.25, blue] (10,3) -- (8,3);

\fill[black] (7,0) circle [radius = 0.15] node[below=4pt] {5};
\fill[black] (9,0) circle [radius = 0.15] node[below=4pt] {4};
\fill[black] (11,0) circle [radius = 0.15] node[below=4pt] {3};
\fill[black] (8,3) circle [radius = 0.15] node[above=4pt] {1};
\fill[black] (10,3) circle [radius = 0.15] node[above=4pt] {2};

\end{adjustbox}{left}
\end{tikzpicture}
\end{center}
\begin{center}
\begin{tikzpicture}[scale=0.75] 
\begin{adjustbox}{left}
\draw[white] (0,0) grid (11,5);

\draw[line width = 2.25, green] (0,2) -- (3,5);
\draw[line width = 2.25, green] (2,2) -- (3,5);
\draw[line width = 2.25, green] (4,2) -- (3,5);
\draw[line width = 2.25, green] (3,5) -- (1,5);

\fill[black] (0,2) circle [radius = 0.15] node[below=4pt] {5};
\fill[black] (2,2) circle [radius = 0.15] node[below=4pt] {4};
\fill[black] (4,2) circle [radius = 0.15] node[below=4pt] {3};
\fill[black] (1,5) circle [radius = 0.15] node[above=4pt] {1};
\fill[black] (3,5) circle [radius = 0.15] node[above=4pt] {2};

%---

\draw[line width = 2.25, yellow] (7,2) -- (10,5);
\draw[line width = 2.25, yellow] (9,2) -- (10,5);
\draw[line width = 2.25, yellow] (11,2) -- (10,5);
\draw[line width = 2.25, yellow] (10,5) -- (8,5);

\fill[black] (7,2) circle [radius = 0.15] node[below=4pt] {5};
\fill[black] (9,2) circle [radius = 0.15] node[below=4pt] {4};
\fill[black] (11,2) circle [radius = 0.15] node[below=4pt] {3};
\fill[black] (8,5) circle [radius = 0.15] node[above=4pt] {1};
\fill[black] (10,5) circle [radius = 0.15] node[above=4pt] {2};
\fill[white] (5.5,0) circle [radius = 0.15] node[above=4pt, black] {{Clockwise: $C_1 = Red > Blue, \; \; C_2 = Green > Yellow$}};

\end{adjustbox}{left}
\end{tikzpicture}

\end{center}

% DIAGRAM 2222222222222222222222

Then, we can start by laying down the $k-j$ connections.

\begin{center}
\begin{tikzpicture}[scale=0.75] 
\begin{adjustbox}{left}
\draw[white] (0,0) grid (11,3);

\draw[line width = 2.25, red] (3,3) -- (1,3);

\fill[black] (0,0) circle [radius = 0.15] node[below=4pt] {5};
\fill[black] (2,0) circle [radius = 0.15] node[below=4pt] {4};
\fill[black] (4,0) circle [radius = 0.15] node[below=4pt] {3};
\fill[black] (1,3) circle [radius = 0.15] node[above=4pt] {1};
\fill[black] (3,3) circle [radius = 0.15] node[above=4pt] {2};

%---

\draw[line width = 2.25, blue] (10,3) -- (8,3);

\fill[black] (7,0) circle [radius = 0.15] node[below=4pt] {5};
\fill[black] (9,0) circle [radius = 0.15] node[below=4pt] {4};
\fill[black] (11,0) circle [radius = 0.15] node[below=4pt] {3};
\fill[black] (8,3) circle [radius = 0.15] node[above=4pt] {1};
\fill[black] (10,3) circle [radius = 0.15] node[above=4pt] {2};

\end{adjustbox}{left}
\end{tikzpicture}
\end{center}
\begin{center}
\begin{tikzpicture}[scale=0.75] 
\begin{adjustbox}{left}
\draw[white] (0,0) grid (11,3);

\draw[line width = 2.25, green] (3,3) -- (1,3);

\fill[black] (0,0) circle [radius = 0.15] node[below=4pt] {5};
\fill[black] (2,0) circle [radius = 0.15] node[below=4pt] {4};
\fill[black] (4,0) circle [radius = 0.15] node[below=4pt] {3};
\fill[black] (1,3) circle [radius = 0.15] node[above=4pt] {1};
\fill[black] (3,3) circle [radius = 0.15] node[above=4pt] {2};

%---

\draw[line width = 2.25, yellow] (10,3) -- (8,3);

\fill[black] (7,0) circle [radius = 0.15] node[below=4pt] {5};
\fill[black] (9,0) circle [radius = 0.15] node[below=4pt] {4};
\fill[black] (11,0) circle [radius = 0.15] node[below=4pt] {3};
\fill[black] (8,3) circle [radius = 0.15] node[above=4pt] {1};
\fill[black] (10,3) circle [radius = 0.15] node[above=4pt] {2};

\end{adjustbox}{left}
\end{tikzpicture}
\end{center}

% DIAGRAM 33333333333333333333333333333
If the edge connecting $k$ to $j$ is color $c$ and $c$ belongs to $k$, we connect points to $k$ with a specific color depending on the pre-determined color ordering with respect to $c$. 

\begin{center}
\begin{tikzpicture}[scale=0.75] 
\begin{adjustbox}{left}
\draw[white] (0,0) grid (11,3);

\draw[line width = 2.25, red] (3,3) -- (1,3);
\draw[line width = 2.25, blue] (0,0) -- (1,3);

\fill[black] (0,0) circle [radius = 0.15] node[below=4pt] {5};
\fill[black] (2,0) circle [radius = 0.15] node[below=4pt] {4};
\fill[black] (4,0) circle [radius = 0.15] node[below=4pt] {3};
\fill[black] (1,3) circle [radius = 0.15] node[above=4pt] {1};
\fill[black] (3,3) circle [radius = 0.15] node[above=4pt] {2};

%---

\draw[line width = 2.25, red] (8,3) -- (11,0);
\draw[line width = 2.25, blue] (10,3) -- (8,3);

\fill[black] (7,0) circle [radius = 0.15] node[below=4pt] {5};
\fill[black] (9,0) circle [radius = 0.15] node[below=4pt] {4};
\fill[black] (11,0) circle [radius = 0.15] node[below=4pt] {3};
\fill[black] (8,3) circle [radius = 0.15] node[above=4pt] {1};
\fill[black] (10,3) circle [radius = 0.15] node[above=4pt] {2};

\end{adjustbox}{left}
\end{tikzpicture}
\end{center}
\begin{center}
\begin{tikzpicture}[scale=0.75] 
\begin{adjustbox}{left}
\draw[white] (0,0) grid (11,3);

\draw[line width = 2.25, yellow] (0,0) -- (3,3);
\draw[line width = 2.25, green] (3,3) -- (1,3);

\fill[black] (0,0) circle [radius = 0.15] node[below=4pt] {5};
\fill[black] (2,0) circle [radius = 0.15] node[below=4pt] {4};
\fill[black] (4,0) circle [radius = 0.15] node[below=4pt] {3};
\fill[black] (1,3) circle [radius = 0.15] node[above=4pt] {1};
\fill[black] (3,3) circle [radius = 0.15] node[above=4pt] {2};

%---

\draw[line width = 2.25, green] (11,0) -- (10,3);
\draw[line width = 2.25, yellow] (10,3) -- (8,3);

\fill[black] (7,0) circle [radius = 0.15] node[below=4pt] {5};
\fill[black] (9,0) circle [radius = 0.15] node[below=4pt] {4};
\fill[black] (11,0) circle [radius = 0.15] node[below=4pt] {3};
\fill[black] (8,3) circle [radius = 0.15] node[above=4pt] {1};
\fill[black] (10,3) circle [radius = 0.15] node[above=4pt] {2};

\end{adjustbox}{left}
\end{tikzpicture}
\end{center}

%DAGRAM 444444444444444444

Finally, we follow a similar process for $j$ but with the remaining vertices and follow the clockwise pre-determined ordering.

\begin{center}
\begin{tikzpicture}[scale=0.75] 
\begin{adjustbox}{left}
\draw[white] (0,0) grid (11,3);

\draw[line width = 2.25, green] (3,3) -- (2,0);
\draw[line width = 2.25, yellow] (3,3) -- (4,0);
\draw[line width = 2.25, blue] (0,0) -- (1,3);
\draw[line width = 2.25, red] (3,3) -- (1,3);

\fill[black] (0,0) circle [radius = 0.15] node[below=4pt] {5};
\fill[black] (2,0) circle [radius = 0.15] node[below=4pt] {4};
\fill[black] (4,0) circle [radius = 0.15] node[below=4pt] {3};
\fill[black] (1,3) circle [radius = 0.15] node[above=4pt] {1};
\fill[black] (3,3) circle [radius = 0.15] node[above=4pt] {2};

%---

\draw[line width = 2.25, red] (11,0) -- (8,3);
\draw[line width = 2.25, green] (7,0) -- (10,3);
\draw[line width = 2.25, yellow] (9,0) -- (10,3);
\draw[line width = 2.25, blue] (10,3) -- (8,3);

\fill[black] (7,0) circle [radius = 0.15] node[below=4pt] {5};
\fill[black] (9,0) circle [radius = 0.15] node[below=4pt] {4};
\fill[black] (11,0) circle [radius = 0.15] node[below=4pt] {3};
\fill[black] (8,3) circle [radius = 0.15] node[above=4pt] {1};
\fill[black] (10,3) circle [radius = 0.15] node[above=4pt] {2};

\end{adjustbox}{left}
\end{tikzpicture}
\end{center}
\begin{center}
\begin{tikzpicture}[scale=0.75] 
\begin{adjustbox}{left}
\draw[white] (0,0) grid (11,3);

\draw[line width = 2.25, yellow] (0,0) -- (3,3);
\draw[line width = 2.25, red] (2,0) -- (1,3);
\draw[line width = 2.25, blue] (4,0) -- (1,3);
\draw[line width = 2.25, green] (3,3) -- (1,3);

\fill[black] (0,0) circle [radius = 0.15] node[below=4pt] {5};
\fill[black] (2,0) circle [radius = 0.15] node[below=4pt] {4};
\fill[black] (4,0) circle [radius = 0.15] node[below=4pt] {3};
\fill[black] (1,3) circle [radius = 0.15] node[above=4pt] {1};
\fill[black] (3,3) circle [radius = 0.15] node[above=4pt] {2};

%---

\draw[line width = 2.25, blue] (9,0) -- (8,3);
\draw[line width = 2.25, red] (7,0) -- (8,3);
\draw[line width = 2.25, green] (11,0) -- (10,3);
\draw[line width = 2.25, yellow] (10,3) -- (8,3);

\fill[black] (7,0) circle [radius = 0.15] node[below=4pt] {5};
\fill[black] (9,0) circle [radius = 0.15] node[below=4pt] {4};
\fill[black] (11,0) circle [radius = 0.15] node[below=4pt] {3};
\fill[black] (8,3) circle [radius = 0.15] node[above=4pt] {1};
\fill[black] (10,3) circle [radius = 0.15] node[above=4pt] {2};

\end{adjustbox}{left}
\end{tikzpicture}
\end{center}
% There are two possibilities:
% \begin{enumerate}
%     \item If an original tree contains an edge $e(k,j,c)$, that edge will remain even after the transferring of the star. 
%     \item If an original tree contains edges $e(k,m,c)$ and $e(j,m,c')$ for a third-party vertex $m$ and any other color $c'$, the new tree resulting 
% \end{enumerate}

To see why this idea does not produce any cycles or disconnections, note the following points:
\begin{itemize}
    \item Each vertex $m \neq j,k$ is connected to \textbf{exactly one} of $j$ or $k$ (either $m$ was connected to $j$ through the second step, or it was one of the remaining $n_2$ vertices that was connected to $k$ through the third step). Also, every tree, by construction, has an edge connecting $j$ to $k$. Therefore, there are no disconnected points in our constructed rainbow trees. 
    \item Because there are no repeated edges or edges of the form $e_{mr}$ in the resulting rainbow trees, where $m,r \neq k,j$, there are no cycles.
\end{itemize}

We can also guarantee that the collection of rainbow trees is disjoint; if one were given the original set of monochromatic trees and the orderings of the stars within a center, then it would be possible to tell which rainbow tree an edge would belong to. This is possible because for a center, we can place the remaining edges of the other colors emanating from that center based off of the ordering of stars on that center, and then place the remaining edges of the stars with the other center based off of the ordering of stars on that center. Being able to dictate which tree an edge is in guarantees disjointedness.

Thus, we can decompose $G$ into $n-1$ disjoint spanning rainbow trees. 
\end{proof}

\subsection{When Stars-to-Stars Holds}
Sections 3.1 and 3.2 suggest that it is possible to make rainbow stars from monochromatic stars with different or same centers, respectively, but a question to ponder is if these are the only cases of monochromatic stars which produce rainbow stars. A natural extension to the star problem addressed above is that of whether a graph induced by monochromatic spanning paths can be decomposed into a set of rainbow spanning paths. 

\begin{thm}\label{thm:main}
    Stars-to-stars holds if and only if all monochromatic stars share the same center or all have different centers. 
\end{thm}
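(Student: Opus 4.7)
The plan is to prove the $(\Rightarrow)$ direction only, since the $(\Leftarrow)$ direction is immediate from Theorems \ref{thm:starsdifferent} and \ref{thm:starssame}. I will argue the contrapositive. Suppose the monochromatic stars neither all share a common center nor all have pairwise distinct centers; then some vertex $v$ is the center of $k_v \geq 2$ monochromatic stars while some other vertex $u \neq v$ is the center of at least one monochromatic star. The goal is to derive a contradiction from the assumption that $G$ still admits a decomposition into $n-1$ disjoint rainbow spanning stars, thereby generalizing the $n=4$ obstruction in Section \ref{sec:counter}.

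The first step is a degree calculation that pins down where rainbow stars can be centered. Let $k_w$ denote the number of monochromatic stars centered at $w$; then $\deg_G(w) = (n-2)k_w + (n-1)$, since $w$ is incident to all $n-1$ edges of each of its $k_w$ stars and to exactly one edge from each of the remaining $n-1-k_w$ stars. Supposing $G$ decomposes into $n-1$ disjoint rainbow spanning stars with $m_w$ of them centered at $w$, the identical count applied to the decomposition yields $\deg_G(w) = (n-2)m_w + (n-1)$, forcing $m_w = k_w$ for every vertex $w$.

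The second step, which is the heart of the argument, uses the coloring. Each rainbow spanning star has $n-1$ edges with pairwise distinct colors drawn from a palette of $n-1$ colors, so it must use every color exactly once. Fix a color $c$ and let $w_c$ be the center of its monochromatic star. All color-$c$ edges are incident to $w_c$, and for any other vertex $x$ the \emph{unique} color-$c$ edge incident to $x$ is the one joining $x$ to $w_c$. Since the $m_x = k_x$ rainbow stars centered at $x$ are pairwise edge-disjoint and each uses a color-$c$ edge incident to $x$, we conclude that $k_x \leq 1$ for every $x \neq w_c$.

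To finish, note that $k_u \geq 1$ guarantees the existence of a color $c$ with $w_c = u$; applying the conclusion of the previous step to this color gives $k_v \leq 1$, directly contradicting $k_v \geq 2$. Hence the assumed configuration cannot admit a rainbow star decomposition, and the contrapositive is established. The main conceptual step is recognizing that each rainbow star must realize every color exactly once; after that, the fact that only one color-$c$ edge touches each non-$w_c$ vertex forces enough rigidity to rule out multiple rainbow stars at any non-$w_c$ center. I do not anticipate any serious technical difficulty.
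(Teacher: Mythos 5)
Your proposal is correct and follows essentially the same route as the paper: the degree count $\deg_G(w)=(n-2)k_w+(n-1)$ forcing the number of rainbow stars at each vertex to equal the number of monochromatic stars there, followed by the observation that a single color-$c$ edge at each vertex other than $w_c$ caps the number of rainbow stars centered there at one. Your write-up of the second step is a slightly cleaner, contrapositive phrasing of the paper's ``exactly $0$, $1$, or $n-1$'' conclusion, but the underlying argument is the same.
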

\begin{proof}
The `if' implication has been established in Theorem \ref{thm:starsdifferent} and Theorem \ref{thm:starssame}.
We are left to prove the converse.

The statement is trivial for $n=2$ because there is only $1$ star consisting of $1$ edge. 
For $n \ge 3$, say we start out with a decomposition of a graph into monochromatic stars. In said decomposition, say there are $s_k$ stars on vertex $k$. Then the degree of vertex $k$ is $$D_k = s_k(n-1) + (n-1)-s_k = (s_k+1)(n-2)+1.$$
The $s_k(n-1)$ comes from the number of stars on that center times the number of edges per star, and the $(n-1)-s_k$ comes from the number of stars not on that center times $1$ for the number of edges that connect to vertex $k$ per star. The right-hand side is a factored version of the equation.

Now, consider a hypothetical decomposition of the same graph into rainbow stars. There must be exactly $s_k$ rainbow stars centered at vertex $k$. If there were more than $s_k$ rainbow stars on vertex $k$, we would have at least $(s_k+2)(n-2)+1$ edges incident to vertex $k$, which is larger than $D_k$. Likewise, if there were less than $s_k$ rainbow stars on vertex $k$, we would have at most $(s_k)(n-2)+1$ edges incident to vertex $k$, which is smaller than $D_k$.

Now, note that if a monochromatic star is centered at $k$, then all edges of that color emanate from vertex $k$. If that monochromatic star is not centered at $k$, then only one edge of that color connects to vertex $k$. Since each rainbow star must have exactly one of a certain color, exactly $0$, $1$, or $n-1$ rainbow stars must be centered at vertex $k$. There being $0$ stars on a vertex must be included because there are $n$ vertices and $n-1$ stars, and by the Pigeonhole Principle, at least one vertex does not have a star centered at it. Therefore, $s_k = 0,1,n-1$, proving our theorem. Note, the $1$ or $n-1$ rainbow stars being centered at a point corresponds with Theorems \ref{thm:starsdifferent} and \ref{thm:starssame}, respectively.
\end{proof}

\section {Paths-to-Paths}\label{section:paths}
% 6.1 - Stars to stars does not always work, only works if star centers are on same or all different nodes
% 6.2 - Paths do not always give paths (done but may need a touch up)

\begin{defn}
    A \textbf{spanning path} on a set of $n$ vertices is a set of $n-1$ edges such that 
    \begin{enumerate}[(a)]
        \item All vertices are included in the path (hence the name ``spanning"), and 
        \item For a given spanning path, all vertices have degree at most 2.
    \end{enumerate}
    In other words, a spanning path can be drawn in one motion without lifting the pencil off of the paper. 
\end{defn}

\begin{defn}
Suppose $G$ is a graph on $n$ vertices that has been edge colored with $n-1$ colors so that each monochromatic subgraph induces a spanning path. Then we say \textbf{Paths-to-Paths}
 holds for $G$ if one can find a collection of $n-1$ disjoint spanning rainbow paths.
\end{defn}

%\begin{conj}
 %   Say a graph $G$ with $n$ vertices can be decomposed into $n-1$ different monochromatic spanning paths. Then, $G$ can be decomposed into $n-1$ rainbow spanning paths. 
%\end{conj}

%It turns out that the conjecture presented above is false. The proof presented below is based on a simple counterexample.

We next show that Paths-to-Paths is in general not possible. 
\begin{comment}
we prove that for the line case for graphic matroids, there exists counterexamples which disporve rotas basis conjecture for paths not spanning trees. (add more to here)

one counterexample we found

visual number 1 

explanation that every new rearragnement of path must include from point 1 to n because in the initial drawing, point 1 had degree of 3.

visual 2 showing red - 1 to 3, and the blue and green paths that can follow are not path but just trees

ending statment saying that this proves that there exists counterexamples for paths where you "dont lift your pencil up", however this does not prove rotas for spanning trees.
\end{comment}

\begin{prop} \label{prop:path}
  For the edge coloring of the graph $G$ on $4$ vertices depicted below, there does not exist any collection of $3$ disjoint rainbow paths.
\end{prop}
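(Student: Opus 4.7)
The plan is to derive a contradiction by a degree-counting argument followed by a short enumeration of path shapes, mirroring the structure of the degree/edge argument used in Theorem~\ref{thm:main}. First, I would compute the degree $d_v$ in the multigraph $G$ for each of the four vertices. Since every spanning path on $4$ vertices has exactly two vertices of degree $1$ (the endpoints) and two of degree $2$ (the internal vertices), each $d_v$ must be expressible as $a_1+a_2+a_3$ with $a_i \in \{1,2\}$. A vertex of degree $3$ (the earlier comment suggests vertex $1$ plays this role) therefore has the unique decomposition $3=1+1+1$ and must be an endpoint of \emph{every} rainbow path; similarly, a vertex of degree $6$ must be internal in all three rainbow paths. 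The remaining two degrees should split uniquely as $2+2+1$ and $2+1+1$, pinning down how often each of those vertices appears as an endpoint.

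With these role assignments, the endpoint pair of each rainbow path is forced: one path has a specific pair (say $\{1,3\}$) and the other two share the complementary pair (say $\{1,4\}$). Since the two internal vertices of a spanning path on four vertices can appear in either order, each endpoint pair yields at most two candidate vertex sequences. For each candidate sequence, I would read off which edges of $G$ it uses and inspect the multiplicity of each edge. Because many edges in the depicted $G$ occur in only one or two colors, the color of each edge in a candidate sequence is largely forced; in each case I expect either a forced repetition (two edges of the same color in one path) or a demand for a colored edge that does not exist in $G$, eliminating the sequence.

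The expected endgame is that the path with endpoint pair $\{1,3\}$ is forced to be one specific sequence using one specific triple of colored edges. After deleting those three edges from $G$, the remaining six edges would have to split into two disjoint rainbow paths with the other endpoint pair; a second pass of the same color-availability analysis will show that every candidate sequence now either repeats a color or requires an edge that was already used. The main obstacle is the bookkeeping: the argument is only airtight if every ordering of the two internal vertices, and every choice of color at each forced edge, is checked. The risk is missing a candidate sequence, so the first step of the enumeration should be to write down, for each endpoint pair, the complete (very short) list of possible vertex sequences before starting the color-elimination argument.
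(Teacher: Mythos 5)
Your plan is sound and would close out the proof, but it follows a somewhat different route from the paper's. Both arguments begin with the same key observation: vertex $1$ has total degree $3$, so it must be an endpoint (degree $1$) in every one of the three rainbow paths. From there the paper argues locally: it fixes the single red edge at vertex $1$ (the edge $\{1,3\}$), notes that the other two edges of that rainbow path must be one blue and one green and cannot touch vertex $1$ again, and checks that the only two completions covering vertices $2$ and $4$ are $\{1,3\}\cup\{3,2\}\cup\{3,4\}$ in either color assignment --- both stars centered at $3$, not paths. Your version instead runs a full degree census ($d_1=3=1{+}1{+}1$, $d_3=6=2{+}2{+}2$, $d_4=5=2{+}2{+}1$, $d_2=4=2{+}1{+}1$), which forces one path with endpoints $\{1,4\}$ and two with endpoints $\{1,2\}$, and then eliminates vertex sequences by color availability; the decisive step is that the two $\{1,2\}$-endpoint paths must use the unique edges $\{1,3\}$ and $\{1,4\}$ respectively, and the one through $\{1,3\}$ is forced to be $1\text{--}3\text{--}4\text{--}2$, which needs both $\{1,3\}$ and $\{2,4\}$ --- edges that exist only in red. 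What your approach buys is a complete classification of what the hypothetical decomposition would have to look like (useful if one wanted to perturb the example), at the cost of more bookkeeping; the paper's argument is shorter because it only needs to show that the single red edge at vertex $1$ lies in no rainbow path at all. Just make sure, when you write it up, to note explicitly that degree $0$ is excluded in the decomposition of $d_2=4$ (a spanning path touches every vertex), since that is what makes the splits unique.
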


\begin{proof}
Suppose $G$ is the edge colored graph on $4$ nodes whose color classes induce the 3 monochromatic spanning paths shown below. 
\vspace{4mm}
%\begin{figure}
\begin{center}
\begin{tikzpicture}[scale=0.75] 
\begin{adjustbox}{left}

\draw[white] (0,0) grid (9,2);

\draw[line width = 2.5, red] (9,1) arc (33.690067:180-33.690067:1.8027756);
\draw[line width = 2.5, red] (6,1) arc (33.6900675:180-33.6900675:3.605551275);
\draw[line width = 2.5, red] (9,1) arc (33.6900675:180-33.6900675:3.605551275);

\fill[black] (0,1) circle [radius = 0.15];
\fill[black] (3,1) circle [radius = 0.15];
\fill[black] (6,1) circle [radius = 0.15];
\fill[black] (9,1) circle [radius = 0.15];

\end{adjustbox}
\end{tikzpicture}
\end{center}

\begin{center}
\begin{tikzpicture}[scale=0.75] 
\begin{adjustbox}{left}

\draw[white] (0,0) grid (9,2);

\draw[line width = 2.5, blue] (3,1) arc (33.690067:180-33.690067:1.8027756);
\draw[line width = 2.5, blue] (6,1) arc (33.690067:180-33.690067:1.8027756);
\draw[line width = 2.5, blue] (9,1) arc (33.690067:180-33.690067:1.8027756);
\fill[black] (0,1) circle [radius = 0.15];
\fill[black] (3,1) circle [radius = 0.15];
\fill[black] (6,1) circle [radius = 0.15];
\fill[black] (9,1) circle [radius = 0.15];

\end{adjustbox}
\end{tikzpicture}
\end{center}

\begin{center}
\begin{tikzpicture}[scale=0.75] 
\begin{adjustbox}{left}

\draw[white] (0,0) grid (9,2);

\draw[line width = 2.5, green] (9,1) arc (33.690067:180-33.690067:3*1.8027756);
\draw[line width = 2.5, green] (6,1) arc (33.690067:180-33.690067:1.8027756);
\draw[line width = 2.5, green] (9,1) arc (33.6900675:180-33.6900675:1.8027756);
\fill[black] (0,1) circle [radius = 0.15];
\fill[black] (3,1) circle [radius = 0.15];
\fill[black] (6,1) circle [radius = 0.15];
\fill[black] (9,1) circle [radius = 0.15];

\end{adjustbox}
\end{tikzpicture}
\end{center}
%\caption{The breakdown of a graph on 4 vertices into its constituent paths}
   % \label{fig:paths}
%\end{figure}

Notice that the first vertex has only 3 total edges emanating from it (1 red, 1 blue, and 1 green). Thus, each of the 3 rainbow spanning paths must have the first vertex be either a starting or ending vertex; without loss of generality, we assume the first vertex to be the starting node for all 3 of the rainbow spanning paths. 

Now, consider constructing the rainbow spanning path that uses the red edge between the first node and the third node. 
\vspace{4mm}
%\begin{figure}
\begin{center}
\begin{tikzpicture}[scale=0.75] 
\begin{adjustbox}{left}

\draw[white] (0,0) grid (9,2);

\draw[line width = 2.5, red] (6,1) arc (33.690067:180-33.690067:2*1.8027756);
\fill[black] (0,1) circle [radius = 0.15];
\fill[black] (3,1) circle [radius = 0.15];
\fill[black] (6,1) circle [radius = 0.15];
\fill[black] (9,1) circle [radius = 0.15];

\end{adjustbox}
\end{tikzpicture}
\end{center}
   % \caption{One path must include the red segment above.}
   % \label{fig:onlyRed}
%\end{figure}

The remaining edges in the rainbow spanning path must be either blue or green. Thus, the two possibilities for the rainbow spanning path are as presented below. 
\vspace{4mm}
%\begin{figure}
\begin{center}
\begin{tikzpicture}[scale=0.75] 
\begin{adjustbox}{left}

\draw[white] (0,0) grid (9,2);

\draw[line width = 2.5, red] (6,1) arc (33.690067:180-33.690067:2*1.8027756);
\draw[line width = 2.5, blue] (6,1) arc (33.690067:180-33.690067:1.8027756);
\draw[line width = 2.5, green] (9,1) arc (33.6900675:180-33.6900675:1.8027756);
\fill[black] (0,1) circle [radius = 0.15];
\fill[black] (3,1) circle [radius = 0.15];
\fill[black] (6,1) circle [radius = 0.15];
\fill[black] (9,1) circle [radius = 0.15];

\end{adjustbox}
\end{tikzpicture}
\end{center}

\begin{center}
\begin{tikzpicture}[scale=0.75] 
\begin{adjustbox}{left}

\draw[white] (0,0) grid (9,2);

\draw[line width = 2.5, red] (6,1) arc (33.690067:180-33.690067:2*1.8027756);
\draw[line width = 2.5, blue] (9,1) arc (33.690067:180-33.690067:1.8027756);
\draw[line width = 2.5, green] (6,1) arc (33.6900675:180-33.6900675:1.8027756);
\fill[black] (0,1) circle [radius = 0.15];
\fill[black] (3,1) circle [radius = 0.15];
\fill[black] (6,1) circle [radius = 0.15];
\fill[black] (9,1) circle [radius = 0.15];

\end{adjustbox}
\end{tikzpicture}
\end{center}
    %\caption{The only two options for a path including that red segment.}
    %\label{fig:onlyOptions}
%\end{figure}

However, notice that neither of the two options presented above is a path; both of them are stars. Therefore, the original graph cannot be decomposed into 3 rainbow spanning paths, establishing the claim. 
\end{proof}

\section{Further thoughts}

We end with some discussion and ideas for future research. Further work regarding Conjecture \ref{conj:main} would include considering the following questions.
\begin{itemize}

    \item If the colors classes induce monochromatic spanning paths, under which circumstances can we find spanning rainbow \emph{paths} (i.e. when does Paths-to-Paths hold)?
    \smallskip
    
    \item Can we prove Conjecture \ref{conj:main} for graphs where the color classes induce monochromatic spanning \emph{paths} (i.e. does Paths-to-Trees hold)?

    \end{itemize}
 %However, we are currently working on a more intuitive proof of this theorem that could provide insight into tackling the general form of Conjecture \ref{conj:main}. 
%Such developments would put us closer to proving Rota's Basis Conjecture for graphic matroids. 

We also remark that Rota's Basis Conjecture makes sense in the more general context of matroid theory. Recall that a \emph{matroid} is a pair $(M,E)$ consisting of a finite ground set $E$ and a set $M$ of subsets of $E$, called \emph{independent sets}, satisfying:
\begin{enumerate}[(a)]
\item $\emptyset \in M$;
\item If $X \in M$ and $Y \subset X$ then $Y \in M$;
\item If  $X, Y \in M$ and $|X| > |Y|$, then there exists an element $x \in X \setminus Y$ so that $Y \cup \{x\} \in B$. 
\end{enumerate}
The maximal elements of $M$ are called the \emph{bases} of $M$. The \emph{rank} of $M$ is the cardinality of any (and hence all) basis element. Rota's Conjecture can then be stated in full generality as follows.

\begin{conj}\label{conj:full}
Suppose $M$ is a rank $n$ matroid with a collection of disjoint bases ${\mathcal B} = \{B_1, \dots, B_{n}\}$. Then one can arrange the elements of ${\mathcal B}$ into an $n \times n$ matrix in such a way that the $i$th row consists of the elements of $B_i$ and each column is also a basis of $M$. 
\end{conj}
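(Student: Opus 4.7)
The plan is to build on the Latin-square encoding from Theorem \ref{thm:latin} and the column-by-column construction of Theorem \ref{thm:samecenter}, now applied to an abstract matroid $M$ of rank $n$. Given the disjoint bases $B_1, \dots, B_n$ arranged as the rows of an $n \times n$ matrix, one seeks a permutation of each row so that every column is itself a basis of $M$. First I would attempt to construct the target matrix column by column: at stage $k$, having already chosen $C_1, \dots, C_{k-1}$, each a basis of $M$ and each using exactly one element from every $B_i$, I would seek $C_k$ consisting of one unused element from each $B_i$ such that $C_k$ is independent in $M$. Since $|C_k| = n$ equals the rank of $M$, any such independent $C_k$ is automatically a basis.

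The central step is therefore an existence lemma: at every stage, a transversal of the residual sets $B_i \setminus (C_1 \cup \dots \cup C_{k-1})$ that is independent in $M$ must exist. This is a rainbow-basis question and admits two natural avenues. The first is combinatorial, via matroid intersection and Rado's theorem: one would try to verify the associated Hall-type condition directly, leveraging the fact that each residual row $B_i$ still has $n-k+1$ elements and that $C_1, \dots, C_{k-1}$ are already disjoint bases. The second is topological, bounding the connectivity of the matroid independence complex and invoking a rainbow independent set theorem in the spirit of Aharoni and Berger.

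The main obstacle is that Conjecture \ref{conj:full} has resisted both approaches for decades. A purely greedy column-by-column construction can deadlock, because early choices may destroy the matroid structure needed for later columns, so some form of global backtracking or simultaneous construction is unavoidable. Moreover, the Latin-square bijection that made Theorem \ref{thm:latin} essentially tight is only available when the rows $B_i$ are permutations of a single underlying set; already the two-center case of Theorem \ref{thm:samecenter} required a bespoke rotation argument rather than a clean bijective encoding. A realistic medium-term plan, consistent with the spirit of the present paper, is to first extend Theorem \ref{thm:main} from graphic matroids to broader classes (such as transversal matroids, where ``stars'' correspond to matchings saturating a single vertex, or paving matroids), isolate a uniform feasibility invariant analogous to the $s_k \in \{0,1,n-1\}$ dichotomy found here, and only then attempt the full statement.
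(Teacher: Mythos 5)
The statement you are addressing is not a theorem of the paper at all: Conjecture \ref{conj:full} is the full Rota's Basis Conjecture for arbitrary matroids, which the paper states as an open problem and explicitly does not prove. The paper's only ``treatment'' of it is a survey of known partial results (paving matroids \cite{GeeHum}, rank at most $3$ \cite{Chan}, strongly base orderable matroids \cite{Wild}, the $(1/2-\mathrm{o}(1))n$ bound of \cite{BKPS}, and the probabilistic result of \cite{Sauermann}). So there is no proof in the paper to compare yours against, and your submission is, by its own admission, a research plan rather than a proof.

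As a plan it is reasonable but contains no new idea that closes the gap. The central step you identify --- that at each stage an independent transversal of the residual sets $B_i \setminus (C_1 \cup \dots \cup C_{k-1})$ exists --- is precisely where the conjecture lives, and you correctly note that a greedy column-by-column construction can deadlock; but you do not supply the global mechanism (backtracking, exchange argument, or otherwise) that would repair a deadlock, so the argument does not go through. The Rado/matroid-intersection route verifies only that a \emph{single} rainbow basis exists under suitable hypotheses, not that $n$ of them can be chosen simultaneously and disjointly; and the topological/connectivity route currently yields only the halfway bound of \cite{BKPS}, not the full statement. Your suggested intermediate goal --- extending the $s_k \in \{0,1,n-1\}$ dichotomy of Theorem \ref{thm:main} to transversal or paving matroids --- is a sensible direction consistent with the paper's closing discussion, but it should be presented as a conjecture or a program, not as a proof of Conjecture \ref{conj:full}.
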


In other words, if we think of each $B_i$ as a color class, we obtain a collection of \emph{colorful} bases ${\mathcal C} = \{C_1, \dots, C_n\}$ where each $C_i$ contains exactly one element from each $B_j \in {\mathcal B}$.
In this paper, we considered matroids coming from a graph $G = (V,E)$, where the ground set is given by the set $E$ of edges, and independent sets are given by collections of edges that do contain a cycle.

Conjecture \ref{conj:full} has been established for
all paving matroids by Geelen and Humphries in \cite{GeeHum}, and for matroids of rank at most $3$ by Chan \cite{Chan}.  It was shown by Wild \cite{Wild} that a stronger version of the conjecture holds for \emph{strongly base orderable} matroids. This class is closed under duality and taking minors, and includes all gammoids.  

In \cite{BKPS} it was shown that for any matroid one can always find $(1/2 - \text{o}(1))n$ disjoint transversal bases (so that we are `halfway' to Rota's Conjecture). Recently, Rota's Basis Conjecture for realizable matrices over finite fields has been established in a probabilistic setting by Sauermann \cite{Sauermann}. Interest in Rota's Basis Conjecture continues to this day, as evidenced by the recent Polymath project \cite{Polymath}. It would be interesting to consider Rota's Basis conjecture for other classes of matroids.

\section*{Acknowledgements}
We would like to thank the 2022 Mathworks Honors Summer Math Camp for their support and encouragement. We would like to extend our thanks to Dr. Max Warshauer and Dr. Eugene Curtin of Texas State University for connecting us after said camp. We would finally like to thank Dr. Anton Dochtermann of Texas State University for continually supporting and guiding us throughout the research project and providing a great working environment.

\begin{comment}

We would like to thank:\par
    \smallskip
    Our mentor Dr. Anton Dochtermann for his support and invaluable guidance,\par
    \smallskip
    Dr. Eugene Curtin, Dr. Max Warshauer, and HSMC Mathworks for their support, encouragement, and help connecting us with Dr. Dochtermann, \par
    \smallskip
    And finally, the audience for their kind attention. \par

The research started under the 2021-High school math camp hosted at Texas State University.
The authors would like to thank the camp organizers for providing support and a great working
environment.

\end{comment}

\vspace{20mm}
\contacts

\end{document}